
 \documentclass[11pt,a4paper,reqno]{amsart}
\hfuzz2pc
\allowdisplaybreaks
\usepackage{amsmath,float}
\date{}
 \usepackage{amssymb,latexsym, esint}
 \usepackage{amsthm}
 \usepackage[latin1]{inputenc}
 \usepackage{enumerate}
\usepackage[dvips]{graphicx}
\usepackage{color}
\usepackage{verbatim}
\DeclareGraphicsExtensions{.pdf,.png,.jpg}
\def\supp{\mathop{\rm supp}}
\newcommand{\dist}{{\rm dist}}
\newcommand{\R}{\mathbb R}

\newtheorem{thm}{Theorem}[section]
\newtheorem{lemma}{Lemma}[section]
 \newtheorem{prop}{Proposition}[section]
\newtheorem{cor}{Corollary}[section]

\theoremstyle{definition}
\newtheorem{defi}{Definition}[section]

\newtheorem{rem}{Remark}[section]

\begin{document}

\title[Localization]
{Localization and landscape functions on quantum graphs}

\author[E. M. Harrell]{Evans M. Harrell II}
\address{School of Mathematics,
Georgia Institute of Technology,
Atlanta GA 30332-0160, USA}
\email{harrell@math.gatech.edu}

\author[A. V. Maltsev]{Anna V. Maltsev}
\address{School of Mathematical Sciences,
Queen Mary University of London,
London E1 4NS, UK}
\email{annavmaltsev@gmail.com}


\begin{abstract}

We discuss explicit
landscape functions for quantum graphs.
By a ``landscape function'' $\Upsilon(x)$ we mean a function that
controls the localization properties of
normalized eigenfunctions $\psi(x)$ through a pointwise inequality of the form
$$
|\psi(x)| \le \Upsilon(x).
$$
The ideal $\Upsilon$ is a function that
\begin{enumerate}[a)]
\item
responds to the potential energy
$V(x)$ and to the structure of the graph in some formulaic way;
\item
is small in examples where
eigenfunctions are suppressed by the tunneling effect; and
\item
relatively large in regions
where eigenfunctions
may - or may not - be concentrated, as observed in specific examples.

\end{enumerate}

It turns out that the connectedness of a graph can present a barrier to the
existence of universal landscape functions in the high-energy r\'egime, as we show
with simple examples.  We therefore apply
different methods in different r\'egimes determined by the values of
the potential energy $V(x)$ and the eigenvalue parameter $E$.

\keywords{Quantum graph, Agmon metric, landsape function, localization, tunneling, Anderson localization}

\end{abstract}

\maketitle

\section{Introduction}

The overarching question that we investigate in this paper is how the graph structure impacts the behavior of eigenfunctions. A quantum graph is locally one-dimensional
and within the realm of Sturm-Liouville theory, but multidimensional features arise from the connectedness.  It
can be thought of as an intermediate case between one-dimensional and multidimensional models.

This article is an exploration of the degree to which explicit
landscape functions can be constructed for the eigenfunctions of quantum graphs.
The phrase ``landscape function'' was introduced by Filoche and Mayboroda in \cite{FiMa1}
to describe a method for locating where eigenfunctions of Schr\"odinger operators on domains and similar
partial differential equations tend to localize.  In particular, they used an
adapted torsion function to create their upper bound, which was a direct inspiration for our
\S \ref{landscape}, below.
Because other techniques can do a better job of localizing eigenfunctions in some circumstances, we
have chosen here to broaden the term
``landscape function'' $\Upsilon(x)$ to mean any function that
can be readily computed or estimated which
controls the localization properties of
normalized eigenfunctions $\psi(x)$ through a pointwise inequality of the form
$$
|\psi(x)| \le \Upsilon(x).
$$
The ideal $\Upsilon$ will be an explicit function simply expressed in terms of the eigenvalue $E$, the metric graph $\Gamma$,
and the potential energy $V(x)$ in
the Schr\"odinger equation living on it.  $\Upsilon(x)$ should vary over the graph and usefully distinguish
the regions where an eigenfunction may be large from those
where it must have small amplitude due to the tunneling effect.

The literature abounds with
techniques to obtain uniform $L^p$ estimates
of eigenfunctions of quantum Hamiltonians for $2 < p \le \infty$, notably Nelson's notion of hypercontractivity, as further developed by many later researchers, cf.\ \cite[\S X.9]{RS2}, \cite[\S 2]{Dav}, \S 2.
See also \cite{Dav83,BaHaTa,Sog}
for other approaches to pointwise bounds on eigenfunctions.
In \cite{Dav13} Davies showed that hypercontractive estimates can be adapted to the case of quantum graphs, as we shall recall
in Proposition~\ref{linftybd1}, below.

For differential
operators, several techniques have successfully been used to
construct landscape functions
that vary in useful ways over Euclidean domains or manifolds,
and related aproaches
will be explored here for quantum graphs.
The circumstances that determine which method is the most effective depend
heavily on the relationship between $V(x)$ and $E$, and to a lesser extent on the graph structure.  The strongest control is obtained in the
\emph{tunneling r\'egime}, where $V(x) > E$,
which is the subject of
\S \ref{Agmon}, using an
Agmon metric,
An explicit upper bound with tunneling decrease into a barrier is
stated in
Theorem \ref{tunneldecr}.  This section follows our
previous work \cite{HaMa}, but improves it by
extending its validity and by making the
constants explicit.
It is even possible to adapt the Agmon method to obtain landscape bounds when
$E > V(x)$, modestly, as we show later in Eq.~\eqref{AgmonUps}.
(For prior work controlling eigenfunctions of differential equations
with Agmon's method, we mention, for example,
\cite{Agm,HiSi,DFP}.)

The second established method uses the
maximum principle to prove inequalities in terms of functions satisfying other differential equations,
especially variants of the torsion function,
as in \cite{FiMa1,FiMa2,Ste}.
We innovate in \S \ref{landscape}
by replacing the torsion function by
something more explicit, consisting of functions of the form constant $+$ Gaussian on
a covering set of intervals and star graphs.  The covering can even in principle be made global
for the graph, although the upper bound will bcome trivial ({\em i.e.},
worse than the uniform bound)
on regions where $E \gg V$.
Examples show that the method based on maximum principles can work well
where $E > V(x)$ but only modestly.

For completeness, in later sections we work out bounds in the situations not covered
in \S\S \ref{Agmon}--\ref{landscape},
that is, when  $E \gg V$ and when $E \approx V(x)$.
Classical ODE methods are available to produce good
pointwise control of solutions, as we review, and for the transitional r\'egime where
$E \approx V(x)$ we are also able to use a variant of the Agmon method, to
give pointwise control of an eigenfunction by integrating over an enclosing ``window.''
In the high-energy r\'egime, there is a key difference
from the previous methods, however:  Whereas the Agmon method and the maximum principle allow one
to control an eigensolution on
an appropriate subset by its values on the boundary of the subset,
the only methods available in the high-energy r\'egime
are shooting methods.  That is, they take the value of a solution and
its derivative at a point and use them to control the solution as it moves along an edge.  Unfortunately,
as evidenced by Case Studies
\ref{scarycasestudy} and \ref{scarycasestudy2},
when a
path passes a vertex from one edge to the next,
the eigensolution on a succeeding vertex can set out with an uncontrolled
change in its derivative.
As a consequence, the bounds obtained from
classical ODE methods do not adapt as well to quantum graphs as do
those using the Agmon and maximum-principle methods.

A phase diagram delineating the different r\'egimes for constructing landscape functions
is depicted in Figure 1.

\begin{figure}\label{phase_diagram}
\begin{center}
\includegraphics[height=9cm]{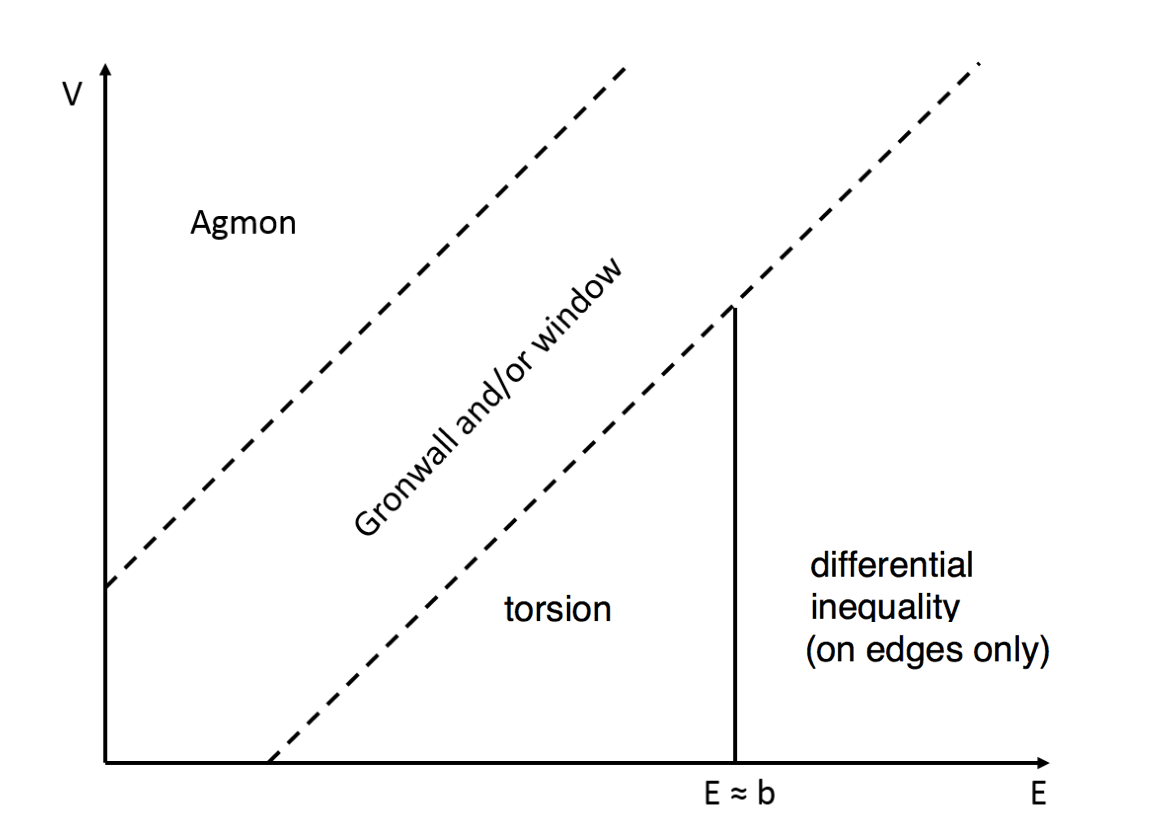}
\caption{Phase diagram of upper bounds on $|\psi|$.}
\end{center}
\end{figure}

\section{Assumptions on quantum graphs and some useful facts}\label{bkgdsection}

In this section we lay out some assumptions and review some facts about quantum graphs.
We recall that a quantum-graph eigenfunction $\psi(x; E)$ is an $L^2$-normalized
function that satisfies
\begin{equation}\label{EVE}
\left(- \frac{d^2}{dx^2} + V(x)\right) \psi(x;E) = E \psi(x;E)
\end{equation}
on the edges of a metric graph $\Gamma$,
and certain conditions at the vertices.
For simplicity, in this article we confine ourselves to
Kirchhoff (a.k.a.\ Neumann-Kirchhoff \cite{Ber}) vertex conditions,
according to which the sum of the outgoing derivatives at each vertex is $0$. We refer to \cite{BeKu,Ber,Post} for background and precise definitions of these operators.

We may assume without loss of generality that the graph has no leaves.
The Kirchhoff vertex condition at the end of a leaf reduces to the
standard Neumann boundary condition.  Any quantum-graph eigenvalue problem on a graph
$\Gamma$ with leaves can be restated on a larger graph $\hat{\Gamma}$ with no leaves,
where $\hat{\Gamma}$ consists of two copies of $\Gamma$ after identification of the corresponding end vertices of the
leaves.  The eigenfunctions on $\Gamma$ simply correspond to eigenfunctions on $\hat{\Gamma}$ which happen to be even
under the symmetry of swapping the two copies $\Gamma$ that compose $\hat{\Gamma}$.

As shown in Figure \ref{phase_diagram}, we will distinguish different parts of the graph based on the corresponding relationship between $V$ and $E$. This is captured in the following set of definitions:

\begin{defi}\label{TunRegDef}
For any finite $E \ge 0$ we refer to
$$
\mathcal{T}_E := V^{-1}((E, \infty))
$$
as the {\em tunneling region} (with respect to $E$), and to its complement
$$
\mathcal{C}_E := V^{-1}([0, E])
$$
as the {\em classically allowed region}.
Since eigenfunctions are expected to be more highly oscillatory where $E \gg V(x)$, we will sometimes
single out
{\em regions of low potential energy},
$\mathcal{C}_{E'} := V^{-1}([0, E'])$, where $E' < E$
(In physical parlance, these are the ``bottoms of the wells.'')
\end{defi}

Throughout the manuscript we make the following assumptions on the graph $G$ and potential $V$:
\begin{enumerate}
\item  The degrees of the vertices are uniformly bounded above by some
$d_{\max} < \infty$.
\item
Every edge
is at least as long as some fixed $L_{\min} > 0$.
\item  $V \ge 0$ and locally integrable
\item
For the eigenvalues $E$ we consider, the classically allowed region is compact.
\end{enumerate}

We shall have occasion below to invoke the maximum principle,
which is easy to extend
to the setting of quantum graphs (cf e.g. \cite{PoPr,Bak}).
We provide a version here that applies to
quantum graphs in a form that is convenient for our purposes.

\begin{lemma}\label{maxprin}
Let $H$ be a quantum-graph Hamiltonian
with $V(x) \ge 0$ on an open subset $\mathcal{S}$ of $G$.  Suppose that $w \in C^2$ and that $H w := - w'' + V(x) w$ on edges, with
``super-Kirchhoff'' conditions at the vertices $v$, that
\begin{equation}\label{superK}
\sum_{\bf e \sim v}w_{\bf e}^\prime(x_0^+) \ge 0,
\end{equation}
i.e., the sum of the outgoing derivatives of $w$ at a vertex is nonnegative.
If $H w \le 0$ on the edges contained in $\mathcal{S}$,
then $w_+ := \max(w,0)$ does not have a strict local maximum on $\mathcal{S}$.
\end{lemma}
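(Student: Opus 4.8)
\emph{Proof proposal.} The plan is to argue by contradiction, reducing everything to the elementary fact that a convex function on an interval has no strict interior local maximum, together with a sign argument at vertices. Suppose $w_+$ attains a strict local maximum at some point $p\in\mathcal S$, and set $m:=w_+(p)$. Since $w_+\ge 0$ everywhere, no point near $p$ can have value strictly below $w_+(p)$ unless $m>0$; hence $m>0$, so in fact $w(p)=m>0$. By continuity $w>0$ on some connected metric-graph neighborhood $U$ of $p$, and because $\mathcal S$ is open we may shrink $U$ so that $U\subseteq\mathcal S$ and $w(x)<m$ for all $x\in U\setminus\{p\}$; that is, $w$ itself has a strict local maximum at $p$.

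The key local observation is convexity: on every edge segment contained in $U$ we have, pointwise, $w''=V w-H w\ge 0$, since $V\ge 0$, $w>0$, and $Hw\le 0$ on the edges of $\mathcal S$. Thus $w$ restricted to each such edge segment is convex, equivalently $w'$ is nondecreasing along it.

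Now distinguish two cases. If $p$ lies in the interior of an edge, then $w$ is convex on an interval around $p$; since $w'$ is nondecreasing there, $w$ is nonincreasing just to the left of $p$ if $w'(p)\le 0$, and nondecreasing just to the right if $w'(p)\ge 0$, either way contradicting the strict maximum at $p$. If instead $p$ is a vertex $v$, take any incident edge $\mathbf e$ parametrized by arclength $s\in[0,\delta)$ from $v$ with $\{0\le s<\delta\}\subseteq U$. As $w'_{\mathbf e}$ is nondecreasing on $[0,\delta)$, the assumption $w'_{\mathbf e}(v^+)\ge 0$ would force $w'_{\mathbf e}\ge 0$ on $[0,\delta)$, hence $w_{\mathbf e}(s)\ge w_{\mathbf e}(0)=w(v)=m$ there, contradicting $w<m$ on $U\setminus\{v\}$. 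Therefore $w'_{\mathbf e}(v^+)<0$ for every edge $\mathbf e$ incident to $v$, so $\sum_{\mathbf e\sim v}w'_{\mathbf e}(v^+)<0$, contradicting the super-Kirchhoff condition \eqref{superK}.

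I expect the point requiring the most care to be the vertex case: one must be precise about the metric-graph notion of neighborhood, ensuring $U$ contains an initial segment of \emph{every} edge meeting $v$ and lies inside the open set $\mathcal S$ where the hypotheses hold, and then combine the edgewise convexity with the vertex condition in the right order. The reduction to $m>0$ and the edge-interior case are routine.
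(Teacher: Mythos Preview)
Your proof is correct, and it takes a genuinely different route from the paper's. The paper follows the classical elliptic-PDE template: it first perturbs $w$ (via $w_\delta(x)=e^{\delta x}w(x)$) to reduce to the strict case $Hw\le -\epsilon^2<0$, and then argues at the putative maximum that $w''\le 0$ together with $Vw\ge 0$ forces $Hw\ge 0$, a contradiction; at a vertex it first uses super-Kirchhoff plus maximality to show all outgoing derivatives vanish, and then invokes the same second-derivative contradiction on each edge.

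You instead observe that on the neighborhood $U$ where $w>0$ one has $w''=Vw-Hw\ge 0$ \emph{everywhere}, not just at the single point, so $w$ is convex on each edge segment of $U$. That global-on-$U$ convexity immediately rules out a strict interior maximum, and at a vertex it forces each outgoing derivative to be strictly negative, directly contradicting super-Kirchhoff. This bypasses the perturbation step entirely and is more elementary; the paper's approach, on the other hand, is the one that generalizes most readily when the zeroth-order coefficient is not assumed nonnegative (so that convexity on $U$ is unavailable), which is why it is the textbook default.
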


\begin{proof}
(See also {\cite{Har}})
We follow a standard proof of the maximum principle for elliptic
partial differential equations,
taking special care at the vertices.

For this purpose we may assume that $w > 0$ at the putative maximum,
as the value 0 cannot logically be a strict local maximum value of $w_+$.
We next argue that it suffices to prove the maximum principle under the assumption that
$H w \le - \epsilon^2$  on $\mathcal{S}$ for some $\epsilon \ne 0$, since if $w$ has a strict local maximum
on $\mathcal{S}$,
then so does $w_\delta(x) := \exp(\delta x) w(x)$ for sufficiently small
$|\delta(x)|$, at a point $x_1 \in \mathcal{S}$.  But $H w_\delta(x) = \exp(\delta x)\left(-\delta^2 w - \delta w' + Hw\right)$,
and therefore for $\delta$ of sufficiently small magnitude and with the same sign as $w'(x_1)$
(supposing that $w'(x_1) \ne 0$),
this will be strictly negative in a neighborhood of $x_1$.

Thus we posit without loss of generality
that $H w \le - \epsilon^2$ for some $\epsilon > 0$.
If we suppose that $w$ is maximized at some
$x_0$ interior to an edge,
then $w^\prime(x_0) = 0$ and $w^{\prime\prime}(x_0) \le 0$, but this contradicts the
assumption that $H w \le - \epsilon^2$.  If on the
other hand the maximizing $x_0$ is a vertex, then for each edge ${\bf e}$
emanating from $x_0$, $w_{\bf e}^\prime(x_0^+) \le 0$.  Because of the super-Kirchhoff conditions, if for any edge,
$w_{\bf e}^\prime(x_0^+) < 0$, there must be at least one other edge $e'$ on which $w_{e'}^\prime(x_0^+) > 0$, which would contradict maximality.  Therefore $w_{\bf e}^\prime(x_0^+) = 0$ for all edges ${\bf e}$, and a necessary condition for maximality is again that
$w_{\bf e}^{\prime\prime}(x_0^+) \le 0$.  This, as before, would contradict $H w \le - \epsilon^2$.
\end{proof}

As mentioned above, eigenfunctions of quantum graphs are bounded above in the $L^\infty$ sense, using hypercontractive
(heat-kernel)
estimates.
A bound of this type was provided in
Lemma 4.1 of \cite{Dav13}:
\begin{prop}\label{linftybd0}
(Davies)
Assume that $\Gamma$ consists of a finite number of finite edges, and
let $\psi_\ell(x) := \psi(x;E_\ell)$ for a particular eigenvalue $E_\ell$.
Then the $L^2$ normalized
eigenfunctions $\psi_j(x)$ satisfy
\begin{equation}\label{BVE}
\sum_{E_j \le E}{\|\psi_j\|_{L^\infty({\bf e})}^2} \le C^2 \sqrt{E - \inf_{\Gamma}(V)},
\end{equation}
where the constant $C>0$ depends only on the graph $\Gamma$.
\end{prop}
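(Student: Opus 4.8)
This is the hypercontractive (heat-kernel) estimate of \cite{Dav13}, so the plan is to reproduce that argument while keeping the dependence of the constant under control. Write $V_0:=\inf_\Gamma V\ge 0$ and let $H_0:=-d^2/dx^2$ denote the Kirchhoff Laplacian on $\Gamma$, with heat kernels $p_t^H$ and $p_t^{H_0}$. Since $\Gamma$ is a finite union of finite edges, $H$ has purely discrete spectrum, $e^{-tH}$ is trace class for $t>0$, and its (jointly continuous) integral kernel expands in the $L^2$-orthonormal eigenbasis as $p_t^H(x,y)=\sum_j e^{-tE_j}\psi_j(x)\overline{\psi_j(y)}$. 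First I would reduce to $H_0$: writing $H-V_0=H_0+(V-V_0)$ with $V-V_0\ge 0$, the Trotter product formula (equivalently, the Feynman--Kac formula on the metric graph) gives the pointwise domination
\[
0\ \le\ p_t^H(x,y)\ \le\ e^{-tV_0}\,p_t^{H_0}(x,y),\qquad x,y\in\Gamma,\ t>0 .
\]

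The heart of the matter, and where the quantum-graph structure enters, is the ultracontractivity bound
\[
\sup_{x\in\Gamma}p_t^{H_0}(x,x)\ =\ \bigl\|e^{-tH_0}\bigr\|_{L^1(\Gamma)\to L^\infty(\Gamma)}\ \le\ \frac{C_0}{\sqrt t}\qquad(0<t\le 1),
\]
with $\sup_x p_t^{H_0}(x,x)\le C_0$ for $t\ge 1$ by compactness of $\Gamma$ and the spectral gap of $H_0$. As in \cite{Dav13}, this follows from a Nash-type inequality on $\Gamma$: the form domain of $H_0$ consists of continuous functions that are $H^1$ on each edge and the Dirichlet form is the sum over edges, so the classical one-dimensional Nash inequality on an interval of length $\ge L_{\min}$, summed over the edges with the overlap at vertices controlled by $d_{\max}$, yields the global inequality, hence the stated kernel bound with $C_0$ a function of $d_{\max}$, $L_{\min}$ and the total length $|\Gamma|$ only. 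I expect this step---patching the local one-dimensional estimates across the vertices while keeping the constant a function of $\Gamma$ alone---to be the main obstacle; the point is that the Kirchhoff condition is exactly what makes the quadratic form additive over edges.

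Combining the two displays, at every $x\in\Gamma$ we get $\sum_j e^{-tE_j}|\psi_j(x)|^2=p_t^H(x,x)\le C_0 t^{-1/2}e^{-tV_0}$ for $0<t\le 1$; discarding the terms with $E_j>E$ and using $e^{-tE_j}\ge e^{-tE}$ on the rest gives $\sum_{E_j\le E}|\psi_j(x)|^2\le C_0\,t^{-1/2}e^{t(E-V_0)}$, and taking $t=\tfrac1{2(E-V_0)}$ (for $E-V_0\ge\tfrac12$; the complementary range involves only finitely many eigenvalues and is absorbed into the constant) yields the pointwise estimate $\sum_{E_j\le E}|\psi_j(x)|^2\le C_0\sqrt{2e}\,\sqrt{E-V_0}$. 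It then remains to promote this to the per-edge bound \eqref{BVE}. Here I would use that $\psi_j$ is not an arbitrary $H^1$ function but solves \eqref{EVE}: on an edge $\mathbf e$ of length $\ell_{\mathbf e}\ge L_{\min}$, a classical Sturm/WKB-type argument bounds $\|\psi_j\|_{L^\infty(\mathbf e)}^2$ by a constant multiple of $\ell_{\mathbf e}^{-1}\|\psi_j\|_{L^2(\mathbf e)}^2$ once $E$ exceeds a threshold fixed by $L_{\min}$---no derivative term appears, because over an interval at least one wavelength long an oscillatory solution's supremum is comparable to its $L^2$-mean, while on the portions of $\mathbf e$ where $E_j<V$ Lemma~\ref{maxprin} controls $|\psi_j|$ by its values at the edge of the classically allowed part. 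Summing over $j$ with $E_j\le E$ and bringing the sum inside the integral,
\[
\sum_{E_j\le E}\|\psi_j\|_{L^\infty(\mathbf e)}^2\ \le\ \frac{c_1}{\ell_{\mathbf e}}\int_{\mathbf e}\sum_{E_j\le E}|\psi_j(x)|^2\,dx\ \le\ c_1\,C_0\sqrt{2e}\,\sqrt{E-V_0},
\]
with the finitely many eigenvalues below the threshold again absorbed into the constant. This is \eqref{BVE} with $C$ depending on $\Gamma$ alone. A secondary subtlety, worth checking carefully, is exactly this last promotion: the naive Sobolev embedding $H^1(\mathbf e)\hookrightarrow L^\infty(\mathbf e)$ would cost an extra factor of $E$ upon summing, so one genuinely needs to exploit the equation rather than mere membership in the form domain.
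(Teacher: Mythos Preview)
Your heat-kernel strategy is essentially the one the paper follows: the proposition itself is quoted from Davies \cite{Dav13} without a self-contained proof, and the discussion immediately after it (leading to \eqref{OLDBVE}) runs exactly as you describe---Lie--Trotter reduces to the free kernel, that kernel is bounded, and one optimises in $t$ at $t=\frac{1}{2(E-V_0)}$. The one real methodological difference is how you control $p_t^{H_0}$: you propose a Nash-inequality argument patched across vertices, whereas the paper (following Davies) compares the graph kernel with the \emph{decoupled} Neumann kernel on the individual edges; see \eqref{thetabd} and the surrounding text. The decoupling route is more elementary---one gets an explicit theta function immediately---and sidesteps the vertex-patching that you correctly identify as the main obstacle in the Nash approach.

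There is, however, a genuine weak spot in your final promotion step. You claim $\|\psi_j\|_{L^\infty(\mathbf e)}^2\le c_1\ell_{\mathbf e}^{-1}\|\psi_j\|_{L^2(\mathbf e)}^2$ with $c_1$ independent of $V$, but this can fail: put $V=0$ on a short subinterval $[0,\epsilon]\subset\mathbf e$ and $V\equiv M$ large on the rest; an eigenfunction with $0<E_j<M$ concentrates in $[0,\epsilon]$, and the best comparison constant is of order $\epsilon^{-1}$, not $\ell_{\mathbf e}^{-1}$. The paper's Theorem~\ref{linftybd1} does establish precisely this $L^\infty$--$L^2$ comparison, but only for the \emph{free} eigenfunctions, which are cosines on each edge (see \eqref{cosbd}); that is why in the paper the comparison is applied to the object $\hat p_\Gamma$ for $H_0$, \emph{after} Lie--Trotter has already stripped out $V$. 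What the heat-kernel argument delivers directly, with a constant depending only on $\Gamma$, is the pointwise bound $\sup_{x\in\mathbf e}\sum_{E_j\le E}|\psi_j(x)|^2\le C^2\sqrt{E-V_0}$; for the single-eigenfunction consequence \eqref{BVE2}, which is all the paper actually uses downstream, this already suffices.
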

For an individual eigenfunction $\psi_j$ it follows that if $E_j \le E$, then
\begin{equation}\label{BVE2}
|\psi_j(x)| \le C (E - \inf_{\Gamma}(V))^{1/4}.
\end{equation}

We next provide a more specific variant of Proposition
\ref{linftybd0}.  The key point in
its proof was
a comparison between the
free heat kernel (i.e., replacing $V$ with $0$) and a finite multiple
of the free heat kernel after additional Neumann boundary conditions
have been imposed at the ends of all the edges, thereby effectively disconnecting the graph.
To include a potential energy $V(x) \ge V_{\min}$, the kernel
of $\exp(-t H)$ can then be
bounded above by $\exp(-V_{\min}t)p_{\bf e}(t,x,y)$
according to a standard
argument using the Lie-Trotter product formula.
(See for example Lemma 1.1 of \cite{Dav73}).

The heat kernel on $\Gamma$ is of the form
\[
p_{\Gamma}(t,x,y) = \frac{1}{|\Gamma|}
+ \sum_{n=1}^\infty{\exp\left(-\lambda_n t\right) \phi_n(x)\phi_n(y)}
\]
where the eigenfunctions $\phi_n$ are normalized in $L^2(\Gamma)$, and in
particular the decoupled heat kernel with Neumann conditions on an
edge ${\bf e}$ is
\[
p_{\bf e}(t,x,y) = \frac{1}{|{\bf e}|} \left(1 + 2 \sum_{n=1}^\infty{\exp\left(-\left(\frac{n \pi}{|{\bf e}|}\right)^2 t\right) \cos\left(\frac{n \pi}{|{\bf e}|} x\right)\cos\left(\frac{n \pi}{|{\bf e}|} y\right)}\right).
\]
choosing the coordinate $x$ on ${\bf e}$ in a convenient way.  The heat-kernel bounds we shall require will estimate the heat-kernel above on each edge by $L^\infty$ norms of the eigenfunctions,
so let us define
\begin{equation}\label{upperhk}
\hat{p}_{\Gamma}(t, {\bf e}) := \frac{1}{|\Gamma|}
+ \sum_{n=1}^\infty{\exp\left(-\lambda_n t\right) \|\phi_n\|_{L^\infty({\bf e})}^2},
\end{equation}
where ${\bf e}$ ranges over the edges of the graph $\Gamma$, and we similarly write
$\hat{p}_{{\bf e}}(t, {\bf e})$ when considering a decoupled edge ${\bf e}$.  On a decoupled edge,
\begin{align}\label{thetabd}
p_{\bf e}(t,x,y) &\le \hat{p}_{{\bf e}}(t, {\bf e})\nonumber\\
&\quad\quad= \frac{1}{|{\bf e}|} \left(1 + 2 \sum_{n=1}^\infty{\exp\left(-\left(\frac{n \pi}{|{\bf e}|}\right)^2 t\right)}\right)\nonumber\\   
&\quad\quad=  \frac{1}{|{\bf e}|} \vartheta_3\left(0,\exp\left(-\left(\frac{\pi}{|{\bf e}|}\right)^2 t\right)\right).
\end{align}

By expanding $\exp(-t H)$ in eigenfunctions
and using the Lie-Trotter formula and
results in \cite{Dav13}, we find that
\begin{equation}
\sum_{E_j \le E}{|\psi_j(x)|^2} \le C^2 e^{(E - \inf_{\Gamma}(V)) t} \hat{p}_{{\bf e}}(t, {\bf e})
\end{equation}
for some $\Gamma$-dependent positive constant $C$.  In fact,
the theta function in \eqref{thetabd}
is dominated
by a larger but more elementary quantity, giving
\[
\sum_{E_j \le E}{|\psi_j(x)|^2} \le C^2 \, \frac{e^{(E - \inf_{\Gamma}(V)) t}}{|{\bf e}|}\left(1 + \frac{|{\bf e}|}{\sqrt{\pi t}}\right),
\]
and choosing $t = \frac{1}{2(E - \inf_{\Gamma}(V)}$ (which is the minimizing value
if one ignores ``$1 +$'' on the right side), one gets a version of \eqref{BVE}, {\em viz}.,
\begin{equation}\label{OLDBVE}
\sum_{E_j \le E}{\|\psi_j\|_{L^\infty({\bf e})}^2} \le C^2 \left(\sqrt{\frac{2 e (E - \inf_{\Gamma}(V))}{\pi}} +  \frac{\sqrt{e}}{|{\bf e}|}\right).
\end{equation}
In the following theorem we work out more explicit constants.

\begin{thm}\label{linftybd1}
Assume that $\Gamma$ consists of a finite number of finite edges, and set
\[
M := 2 \max_{[\pi,\infty]}\left(\frac{1}{1 - \frac{\sin x}{x}}\right) \doteq 2.29456\dots.
\]
Then
\begin{equation}\label{hkub}
\hat{p}_{\Gamma}(t, {\bf e}) \le \frac{1}{|\Gamma|} + \frac{1}{|{\bf e}|}\left(3 \sum_{E_j < \frac{\pi}{|{\bf e}|}} {e^{-E_j t}}
+  M \sum_{E_j \ge \frac{\pi}{|{\bf e}|}} {e^{-E_j t}}\right)
\end{equation}
and
\begin{equation}\label{hkub2}
\sum_{\bf e}\hat{p}_{\Gamma}(t, {\bf e}) \le \frac 3 2 \sum_{\bf e}\hat{p}_{\bf e}(t, {\bf e}).
\end{equation}
In particular, the estimates of Prop. \ref{linftybd0} hold with
$C^2 \le \frac{3 m}{2 \min|{\bf e}|}$, where $m$ is the number of edges of $\Gamma$.
\end{thm}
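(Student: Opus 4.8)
The plan is to establish the three assertions in turn, reducing everything to the free ($V\equiv 0$) Laplacian $H_0$ on $\Gamma$ and the sinusoidal form of its eigenfunctions, and then reinstating the potential exactly as the text already does.

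\emph{Step 1 (the per-edge bound \eqref{hkub}).} On each edge ${\bf e}$, identified with $[0,|{\bf e}|]$, every eigenfunction of $H_0$ with $\lambda_n>0$ restricts to a pure sinusoid, $\phi_n(x)=a_n\cos(\sqrt{\lambda_n}\,x+\theta_n)$. A direct integration gives, with $\xi_n:=\sqrt{\lambda_n}\,|{\bf e}|$,
\[
\|\phi_n\|_{L^2({\bf e})}^2=\frac{a_n^2|{\bf e}|}{2}\left(1+\frac{\sin\xi_n\,\cos(\xi_n+2\theta_n)}{\xi_n}\right),
\]
while $\|\phi_n\|_{L^\infty({\bf e})}^2\le a_n^2$, with equality once $\xi_n\ge\pi$ (the edge is then long enough to contain an extremum of the cosine). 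Combined with the normalisation $\|\phi_n\|_{L^2({\bf e})}^2\le\|\phi_n\|_{L^2(\Gamma)}^2=1$, this bounds $\|\phi_n\|_{L^\infty({\bf e})}^2$ by a constant over $|{\bf e}|$: in the range $\xi_n<\pi$ a crude estimate of the trigonometric ratio already yields the (non-sharp) constant $3$, which suffices since there are only finitely many such modes; in the range $\xi_n\ge\pi$ one obtains the sharper constant $M$ (see the last paragraph). Summing over $n$ then gives \eqref{hkub}.

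\emph{Step 2 (the trace-type comparison \eqref{hkub2}).} Write $\sum_{\bf e}\hat p_\Gamma(t,{\bf e})=\frac{m}{|\Gamma|}+\sum_{n}e^{-\lambda_n t}\sum_{\bf e}\|\phi_n\|_{L^\infty({\bf e})}^2$. For fixed $n$, estimate $\sum_{\bf e}\|\phi_n\|_{L^\infty({\bf e})}^2$ by Step 1, retaining the factor $\|\phi_n\|_{L^2({\bf e})}^2$ (and, on the long edges, the nearby $L^2$-mass on the incident edges). Then use the operator inequality $e^{-tH^{\mathrm{dec}}}\ge e^{-tH_0}$, valid because decoupling the graph — imposing Neumann conditions at every vertex — only enlarges the form domain of $\int|f'|^2$; this yields $\int_{\bf e}p_{H_0}(t,x,x)\,dx\le\int_{\bf e}p_{\bf e}(t,x,x)\,dx=1+\sum_{k\ge1}e^{-(k\pi/|{\bf e}|)^2t}$ on each edge. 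Assembling these against $\frac32\sum_{\bf e}\hat p_{\bf e}(t,{\bf e})$ and checking the numerical facts ($M\le3$, $3\le\frac32\cdot2$, and $m/|\Gamma|\le\sum_{\bf e}1/|{\bf e}|$) produces the factor $\frac32$.

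\emph{Step 3 (recovering Proposition \ref{linftybd0}).} Reinstate the potential via the Lie--Trotter/Feynman--Kac bound $e^{-tH}\le e^{-(\inf_\Gamma V)t}e^{-tH_0}$ recalled in the text, so that, using $\mathbf 1_{E_j\le E}\le e^{(E-E_j)t}$ and $\sup_{x\in{\bf e}}p_{H_0}(t,x,x)\le\hat p_\Gamma(t,{\bf e})$, one gets $\sum_{E_j\le E}|\psi_j(x)|^2\le e^{(E-\inf_\Gamma V)t}\hat p_\Gamma(t,{\bf e})$ for $x\in{\bf e}$. Feed in \eqref{hkub2}, bound the theta function $\vartheta_3$ of \eqref{thetabd} by the elementary $1+|{\bf e}|/\sqrt{\pi t}$, and optimise over $t$ at $t=(2(E-\inf_\Gamma V))^{-1}$; discarding the lower-order term as in the passage leading to \eqref{OLDBVE} gives the estimate of Proposition \ref{linftybd0} with $C^2\le\frac{3m}{2\min|{\bf e}|}$.

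\emph{The main obstacle} is the sharp constant $M$ in the long-edge part of Step 1. The naive comparison $\|\phi_n\|_{L^\infty({\bf e})}^2=a_n^2=\frac{2}{|{\bf e}|(1+\mathrm{err}_n)}\|\phi_n\|_{L^2({\bf e})}^2$ with $\|\phi_n\|_{L^2({\bf e})}^2\le1$ only produces the larger constant $2\bigl(1-\sup_{x\ge\pi}\tfrac{|\sin x|}{x}\bigr)^{-1}$, attained by the phase $\theta_n$ that makes $\mathrm{err}_n$ as negative as possible. To reach $M=2\sup_{x\ge\pi}(1-\tfrac{\sin x}{x})^{-1}$ one must observe that precisely this extremal phase forces $|\phi_n|$ to be a definite fraction of $a_n$ at \emph{both} ends of ${\bf e}$; by continuity at the vertices and the lower bound $L_{\min}$ on edge lengths this pushes a definite fraction of the $L^2$-mass of $\phi_n$ onto the adjacent edges, and trading this lost mass against the size of $\mathrm{err}_n$ is what recovers $M$ — and carrying the same care through the bookkeeping of Step 2 is what makes the factor there exactly $\frac32$.
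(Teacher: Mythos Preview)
Your overall architecture (sinusoidal form on edges, $L^2$--$L^\infty$ comparison, Neumann decoupling) matches the paper's, but the part you flag as the ``main obstacle'' and your proposed resolution of it diverge from the paper and introduce a genuine gap.

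The paper's argument for the long-edge constant is simply this: optimising $\int_{\bf e}\cos^2(\sqrt{E}(x-\phi))\,dx$ over the phase gives \eqref{cosbd}, and $M$ is read off from that inequality. There is no appeal to adjacent edges, to continuity at vertices, or to $L_{\min}$. Your observation that the unconstrained minimisation actually yields $\frac{|{\bf e}|}{2}(1-\frac{|\sin\xi|}{\xi})$ rather than $\frac{|{\bf e}|}{2}(1-\frac{\sin\xi}{\xi})$ is sharp --- it points to a wrinkle in the paper's own write-up --- but your fix is the wrong response. The estimate \eqref{hkub} is a \emph{per-edge} bound: it depends only on $|{\bf e}|$ and the spectrum, and must hold for a single-edge loop graph where there are no neighbouring edges to absorb mass. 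An argument that borrows $L^2$ mass from incident edges via $L_{\min}$ cannot produce \eqref{hkub} as stated. Moreover, your claim that the extremal phase forces $|\phi_n|$ to be a ``definite fraction of $a_n$ at \emph{both} ends'' is not substantiated, and the trade-off you describe (lost mass against the size of $\mathrm{err}_n$) is never carried out quantitatively. The honest route is either to accept the constant $2(1-\sup_{x\ge\pi}|\sin x|/x)^{-1}\doteq 2.55$ in place of $M$, or to note that the paper's \eqref{cosbd} is asserted with $\sin$ and leave the discrepancy there.

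Two smaller points. First, for the short-edge constant $3$ the paper does \emph{not} use a ``crude estimate of the trigonometric ratio''; it argues that on an edge where $|\psi|$ has no interior maximum, $|\psi|$ is concave on each sign-definite subinterval, and comparison with the linear interpolant gives $\|\psi\|_{L^2({\bf e})}^2\ge\frac{|{\bf e}|}{3}\|\psi\|_{L^\infty({\bf e})}^2$. Second, your Step~2 is not yet a proof: ``assembling these\dots and checking the numerical facts\dots produces the factor $\tfrac32$'' hides the actual inequality one needs, and your intermediate step retains edge-dependent weights $1/|{\bf e}|$ that do not combine directly with the decoupled trace inequality. The paper's own argument here is also terse, but it rests on the single mechanism that Neumann decoupling lowers all eigenvalues and hence raises each $e^{-\lambda_k t}$; you should make that the backbone rather than an ingredient among several.
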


\begin{rem}
The constant given in the last line of the theorem is quite crude, and can be greatly improved in
all of the examples we have examined.
Indeed, the free heat kernel for $\Gamma$ can often be estimated directly, and,
{\em e.g.}, for the regular tetrahedral graph in Case Study \ref{tetrah},
\begin{equation}\label{hkubcstis1}
\hat{p}_{\Gamma}(t, {\bf e}) < \hat{p}_{{\bf e}}(t, {\bf e}).
\end{equation}
The fact that the constant in this case $\le 1$ can be understood with reference to
the symmetry group, which allows an averaging over the edges, and the inequality is
strict because the constant term in the heat kernel is greater for $p_{\bf e}$ than for $p_{\Gamma}$.
We conjecture that:
{\em \eqref{hkubcstis1} holds true for every finite metric graph more complicated than a single
edge, and that even with infinitely
long edges a similar domination holds.}
\end{rem}

\begin{proof}
Because the eigenfunctions of the free quantum graph (setting $V=0$) are of the form
$\psi(x) = N_{\bf e} \cos(\sqrt{E} (x - \phi_{\bf e}))$ on each edge,
the $L^2$ and $L^\infty$ norms of $\psi$ on a given edge are related by optimizing
the integral
\[
\|\psi\|_{L^2{\bf e}}^2 = \int_{\bf e}(\cos(\sqrt{E}(x - \phi))^2 dx
\]
with respect to $\phi$, which by a calculation yields
\begin{equation}\label{cosbd}
\|\psi\|_{L^2({\bf e})}^2 \ge  \frac{|\bf e|}{2} \left(1 - \frac{\sin(\sqrt{E} |\bf e|)}{\sqrt{E} |\bf e|)})\right) \|\psi\|_{L^\infty({\bf e})}^2,
\end{equation}
provided that $|\psi(x)|$ attains a maximum in ${\bf e}$, which is guaranteed by the Sturm separation theorem when
$\sqrt{E} |{\bf e}| \ge \pi$.  This is the origin of the constant $M$ in the theorem.
For edges not containing a local maximum of $|\psi(x)|$,
a straightforward bound is obtained by appealing to the concavity of $|\psi(x)|$ on intervals
where $\psi$ does not change sign, for which comparison with linear interpolation
gives
\[
\|\psi\|_{L^\infty({\bf e})}^2 \le \frac{3}{|{\bf e}|}\|\psi\|_{L^2({\bf e})}^2.
\]
The possibility of a sign change would only improve this upper bound.
The $L^\infty$ norms of the eigenfunctions on the edges are thus uniformly bounded from above
by the $L^2$ norms, so that
with
\begin{equation}
\|\sum_\ell |\psi_{k_\ell}|^2\| \le \sum_\ell \|\psi_{k_\ell}\|^2,
\end{equation}
one readily obtains \eqref{hkub}.
\smallskip
To derive \eqref{hkub2}, recall that the imposition of Neumann boundary conditions lowers the eigenvalues, and consequently
increases the factors $e^{- \lambda_k t}$ for $t > 0$.  Meanwhile, in order to account properly for the full eigenspaces associated with
$\Gamma$, it is necessary to include all of the edges.
(For the sake of a simple formula we have replaced $M$ by $3$ and have not optimized the constant term.)
\end{proof}

Another inequality that we can adapt to quantum graphs with a
simple
proof is the Harnack inequality.

\begin{thm}[Harnack inequality for quantum graphs]
Let $U$ be an open subset of $\Gamma$ and let $W \subset U$ be connected and compact.
Then there exists a constant $C$ depending only on $U$, $W$, $V(x)$, and $E$, such that
every real-valued $\psi(x)$ defined on $U$, which never vanishes and satisfies
\[
\text{\rm sgn}(\psi(x))(- \psi''(x) + (V(x) -E) \psi) \ge 0
\]
on the edges and Kirchhoff conditions at the vertices,
obeys the inequality
\[
\frac{\max_W |\psi|}{\min_W |\psi|}\leq C.
\]
\end{thm}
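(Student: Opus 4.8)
The plan is to run the classical chain argument for the Harnack inequality, adapted to a metric graph: the work is done by a \emph{local} Harnack estimate on ``small'' pieces --- short subintervals of edges and small star-shaped neighborhoods of single vertices --- and the global statement is obtained by chaining finitely many such pieces along a path joining the extremal points.

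First I would reduce to a positive function. Since $\psi$ is continuous on $U$ and never vanishes, it has constant sign on each connected component of $U$; as $W$ is connected it lies in one component $U_0$, which, being open in the (locally path-connected) graph, is path-connected. Replacing $\psi$ by $-\psi$ if necessary, write $u:=\psi>0$ on $U_0$, so that $-u''+(V-E)u\ge 0$ on edges and $u$ satisfies the Kirchhoff conditions at the vertices of $U_0$. Fix a compact connected neighborhood $\widehat W$ of $W$ with $\widehat W\subset U_0$; on $\widehat W$ the quantity $q:=V-E$ is integrable, so $\int_J|q|$ is as small as desired once $J$ is short enough.

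Next, the local estimate. Cover $W$ by finitely many open pieces $O_1,\dots,O_N$ with $\overline{O_i}\subset U_0$, each either an open subinterval of an edge or the union of the incident half-edges of length $r$ about one vertex, chosen short enough ($r$ small relative to $\sup_{\widehat W}|q|$, or to $\int|q|$ on the piece) that $-\tfrac{d^2}{dx^2}+q$ on $O_i$, with Dirichlet data on the outer boundary $\partial O_i$ and Kirchhoff data at the central vertex in the star case, has strictly positive lowest eigenvalue; call such a piece \emph{admissible}. On an admissible piece this operator has a positive Green's function and a strictly positive continuous Poisson kernel $P_{O_i}(x,\xi)$, $\xi\in\partial O_i$, so that on the concentric half-size piece $O_i'$ one has fixed bounds $0<\alpha_i\le P_{O_i}(x,\xi)\le\beta_i<\infty$ for $x\in\overline{O_i'}$. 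Representing $u$ on $O_i$ through its boundary data --- in the star case this rests on the super-Kirchhoff maximum principle of Lemma~\ref{maxprin} applied to the Dirichlet/Kirchhoff boundary, which also shows positivity propagates into the interior and that $u$ admits no interior extremum beyond what the data force --- one gets
\[
\sup_{O_i'} u \;\le\; \frac{\beta_i}{\alpha_i}\,\inf_{O_i'} u \;=:\; c_0^{(i)}\,\inf_{O_i'} u .
\]
On an edge interior this is just the one-dimensional comparison of $u$ with solutions of the constant-coefficient comparison equation, the potential being a small perturbation on a short interval.

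Finally I would chain. Shrinking $r$ if needed, the half-size pieces still cover $W$. Let $x_*,x^*\in W$ realize $\min_W u$ and $\max_W u$ and join them by a path $\gamma\subset U_0$; cover $\gamma$ by a finite subchain $O_{j_1}',\dots,O_{j_n}'$ with $x^*\in O_{j_1}'$, $x_*\in O_{j_n}'$ and consecutive pieces overlapping, pick a point in each overlap, and apply the local Harnack on each piece in turn to obtain $u(x^*)\le c_0^{(j_1)}\cdots c_0^{(j_n)}\,u(x_*)$, i.e.\ $\max_W|\psi|/\min_W|\psi|\le C$ with $C:=\prod_k c_0^{(j_k)}$ depending only on the finite cover, hence on $U$ and $W$, and on the constants $c_0^{(i)}$, hence on $V$ and $E$. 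I expect the one genuinely delicate point to be the local Harnack \emph{across a vertex}: at a Kirchhoff vertex the individual outgoing derivatives are unconstrained, so one must check that admissibility together with Lemma~\ref{maxprin} still forces the Poisson kernel of the small star to be two-sidedly bounded on the half-star --- equivalently, that a positive solution on a small star cannot be far larger on one incident half-edge than on another near the center --- and that the bounds $\alpha_i,\beta_i$ (and the threshold for $r$) can be taken uniform over the finitely many pieces meeting $W$.
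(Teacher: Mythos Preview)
Your chaining strategy is the classical route to Harnack and is genuinely different from the paper's proof. The paper gives a direct global estimate: it works with $\ln\psi$, derives the pointwise inequality $(\psi'/\psi)^2 \le -(\ln\psi)'' + (V-E)$ from the supersolution hypothesis, integrates $\psi'/\psi$ along a path $P$ from the minimizer $x_1$ to the maximizer $x_2$, applies Cauchy--Schwarz, and then multiplies by a cutoff $\eta$ supported on a slight fattening $\tilde P$ of $P$ (including short spurs at each vertex so that the Kirchhoff contributions cancel upon integration by parts). The outcome is an explicit bound on $(\ln(\psi(x_2)/\psi(x_1)))^2$ in terms of $|P|$, $\int_{\tilde P}\eta^2(V-E)$, and $\int_{\tilde P}(\eta')^2$, handling all vertices in one stroke with no local Harnack needed. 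What your approach would buy, if carried through, is modularity; what the paper's approach buys is an explicit, computable constant and no need to analyze stars separately.

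Your argument, however, has a genuine gap in the local step, and it is not the vertex issue you flag. The hypothesis is that $u$ is a \emph{supersolution}, $-u''+qu\ge 0$, not a solution; ``representing $u$ through its boundary data'' via the Poisson kernel is therefore not available. The Riesz decomposition on an admissible piece is $u(x)=\sum_{\xi\in\partial O_i}P_{O_i}(x,\xi)\,u(\xi)+\int_{O_i}G_{O_i}(x,y)\,d\mu(y)$ with $\mu\ge 0$, and the Green's-potential term can dominate. In particular a positive supersolution \emph{can} have an interior maximum (any positive concave function on an interval does), so Lemma~\ref{maxprin} does not yield that ``$u$ admits no interior extremum beyond what the data force,'' and the bound $\sup_{O_i'}u\le(\beta_i/\alpha_i)\inf_{O_i'}u$ does not follow from Poisson-kernel bounds alone. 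To repair the local step you would also need a uniform two-sided bound on $G_{O_i}(x,y)/G_{O_i}(x',y)$ for $x,x'\in O_i'$ and $y\in O_i$; this does hold on short intervals and small stars, but it is a separate estimate you have not supplied. A cleaner fix is to drop the Poisson-kernel route for the local step and use the log-derivative inequality $(\,(\ln u)'\,)^2\le -(\ln u)''+(V-E)$ on each small piece, which accommodates the supersolution hypothesis directly.
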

\begin{proof}
  We may ssume $\psi >0$. Abbreviating $H f := - f'' + V f$ as usual,
  \begin{equation}\begin{split}
  (H-E)\ln \psi &= - \frac{d}{dx}\, \left( \frac{\psi'}{\psi}\right) + (V - E)\ln \psi \\
  &
  = \frac{1}{\psi}\,(H- E)\psi + \left( \frac{\psi'}{\psi}\right)^2 + (V - E)(\ln \psi - 1).
  \end{split}\end{equation}
  By assumption the first term on the right is nonnegative, and so for all $x$ (other than vertices) in $U$, we get
  \begin{equation}
  \left( \frac{\psi'}{\psi}\right)^2 \leq -\frac{d^2}{dx^2} \ln \psi + (V - E).
  \end{equation}
  Let $r = \ln (\frac{\psi(x_2)}{\psi(x_1)})$ for some fixed pair of points $x_{1, 2}\in W$
  (for example, $x_2$ maximizing $\psi$ and $x_1$ minimizing $\psi$).
  Then if $P$ is any path from $x_1$ to $x_2$,
  \[
  r^2 = \left( \int_{P}\frac{\psi'(t)}{\psi(t)}\, dt\right)^2 \leq |P| \int_P\left( \frac{\psi'(t)}{\psi(t)}\right)^2 dt.
  \]
  Let $\tilde P = P \cup J$ where $J = \cup I_i$, and $I_i$ are short intervals of two kinds
  adjacent to $P$
  (i.e. they are short enough that they do not reach the next vertex):
  \begin{enumerate}
    \item Short extensions beyond $x_{1,2}$
    \item Some neighborhoods of the vertices, i.e. including little bits of edges whose vertices lie in $P$
  \end{enumerate}
  Now let $\eta$ be a piecewise
  $C^1$ function such that $\eta := 1$ on $P$ and $\eta:= 0$ on $\tilde P^c$.
  (Specifically, $\tilde P$ could be chosen as $\{x \in U: \dist(x, P) < L_{\text{min}}/2 \}$, and
  $\eta$ as a linear ramp going from $1$ to $0$ as $x$ goes from $P$ to $\partial \tilde P$.)
  Then
  \[
  r^2 \leq |P|\int_{\tilde P}\eta^2 \left( \frac{\psi'}{\psi}\right)^2 \leq |P| \int_{\tilde P}\eta^2 \left(-\frac{d^2}{dx^2}\ln \psi + V - E\right)
  \]
  We now integrate by parts and use the fact that the contributions at the vertices add up to zero by Kirchhoff, leaving
  \begin{equation}\begin{split}
  \int_{\tilde P}\eta^2\left( \frac{\psi'}{\psi}\right)^2
  &
  \leq \int_{\tilde P} \eta^2(V - E) + \int_{\tilde P}2\eta'\eta\, \frac{\psi'}{\psi}\\
  &
 \leq  \int_{\tilde P} \eta^2(V - E) + \frac{1}{\alpha}\int_{\tilde P} (\eta ')^2 + \alpha \int_{\tilde P}\left(\eta \frac{\psi'}{\psi}\right)^2.
  \end{split}\end{equation}
  Choosing $\alpha = 1/2$ we obtain
 \[
 \int_{\tilde P}\eta^2\left( \frac{\psi'}{\psi}\right)^2
 \leq 2 \int_{\tilde P} \eta^2(V - E) + 4\int_{\tilde P}(\eta ')^2,
 \]
 which is independent of $\psi$ as claimed.
\end{proof}

A final tool we adapt to quantum graphs is a lower-bound inequality of Boggio (more often attributed to Barta; see
\cite{FHT99}
 for some discussion of the contribution of Boggio \cite{Bog}), {\em viz}.,
i.e.\ if $\Delta$ is the Dirichlet Laplacian on a domain and $v(x)>0$ is a suitably regular function,
 then, in the weak sense,
$$
-\Delta \ge \frac{- \Delta v(x)}{v(x)}.
$$

Since the graph Laplacian is more analogous to a domain's Neumann Laplacian than to its Dirichlet Laplacian, it may be surprising that
Boggio's inequality
extends without complications:
\begin{lemma}\label{Boggio}
Let $\Gamma_0$ be a quantum graph with Kirchhoff or Dirichlet boundary conditions at vertices, possibly independently assigned.  Suppose that $\Phi > 0$ is a $C^2$ function
on the edges and satisfies super-Kirchhoff conditions
\eqref{superK} at all vertices.  Then for every $f \in H^1(\Gamma)$,
\[
\sum_{{\bf e}\in \Gamma_0}\int_{\bf e}{|f'(x)|^2} \ge \sum_{{\bf e}\in \Gamma_0}\int_{\bf e}{|f(x)|^2 \left(\frac{-\Phi''(x)}{\Phi(x)}\right)}.
\]
\end{lemma}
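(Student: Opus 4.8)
My plan is to run the classical ``ground-state substitution'' (the identity behind the Allegretto--Piepenbrink and Barta--Boggio inequalities), arranging things so that the graph enters only through boundary terms at the vertices. First I would reduce to the case where $f$ is smooth on each edge — continuous at the vertices and vanishing at the Dirichlet ones — the general $f\in H^{1}(\Gamma)$ following by approximation. Since $\Phi$ is $C^{2}$, strictly positive, and single-valued at each vertex, the quotient $g:=f/\Phi$ then lies in $H^{1}$ on every edge, is continuous across vertices, and vanishes wherever $f$ does. The key elementary fact I would record is the pointwise identity, valid on each edge,
\[
|f'|^{2}=\Phi^{2}\,|g'|^{2}+\Bigl(\tfrac{\Phi'}{\Phi}\,f^{2}\Bigr)'+\frac{-\Phi''}{\Phi}\,f^{2},
\]
which one checks by writing $\Phi^{2}|g'|^{2}=\bigl|f'-(\Phi'/\Phi)f\bigr|^{2}$ and regrouping; it is the ``completed-square'' form of the Riccati substitution.

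Next I would integrate this identity over each edge ${\bf e}$ and sum, obtaining
\[
\sum_{{\bf e}}\int_{{\bf e}}|f'|^{2}=\sum_{{\bf e}}\int_{{\bf e}}\Phi^{2}\,|g'|^{2}+\sum_{{\bf e}}\Bigl[\tfrac{\Phi'}{\Phi}\,f^{2}\Bigr]_{\partial {\bf e}}+\sum_{{\bf e}}\int_{{\bf e}}\frac{-\Phi''}{\Phi}\,f^{2}.
\]
The first sum on the right is manifestly nonnegative, so the lemma reduces to showing that the total of the endpoint terms is nonnegative. For that I would reassemble those terms vertex by vertex: writing $\Phi'_{\bf e}(v^{+})$ for the derivative of $\Phi$ at $v$ taken into ${\bf e}$ as in \eqref{superK}, each edge at $v$ contributes $-\frac{f(v)^{2}}{\Phi(v)}\,\Phi'_{\bf e}(v^{+})$, so the endpoint sum equals $-\sum_{v}\frac{f(v)^{2}}{\Phi(v)}\sum_{{\bf e}\sim v}\Phi'_{\bf e}(v^{+})$. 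At a Dirichlet vertex $f(v)=0$ and the term drops; at a Kirchhoff vertex the sign of the contribution is governed by $\sum_{{\bf e}\sim v}\Phi'_{\bf e}(v^{+})$, which is precisely the quantity the super-Kirchhoff hypothesis on $\Phi$ pins down. Assembling the three pieces then gives $\sum_{{\bf e}}\int_{{\bf e}}|f'|^{2}\ge\sum_{{\bf e}}\int_{{\bf e}}\frac{-\Phi''}{\Phi}\,f^{2}$.

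The hard part, I expect, is exactly this last step — the vertex bookkeeping. One has to orient each one-sided derivative consistently when the per-edge boundary terms from the fundamental theorem of calculus are regrouped into a sum over vertices, and then verify that the vertex condition on $\Phi$ (together with $f(v)=0$ at Dirichlet vertices) is what makes the regrouped sum have the required sign. This is the one genuinely new point compared with the classical domain case, where the corresponding boundary integral simply vanishes because $f\in H^{1}_{0}$: here its vanishing (or favorable sign) must instead be read off from the vertex conditions. Everything else — the substitution, the pointwise identity, the edgewise integration by parts, and the density reduction — should be routine one-dimensional calculus.
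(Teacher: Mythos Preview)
Your approach via the ground-state substitution is the same route the paper takes (the paper's ``Picone's inequality'' is your completed-square identity after one drops the nonnegative $\Phi^{2}|g'|^{2}$ term). You have the pointwise identity right, and you have correctly computed that the endpoint sum equals
\[
-\sum_{v}\frac{f(v)^{2}}{\Phi(v)}\sum_{{\bf e}\sim v}\Phi'_{\bf e}(v^{+}).
\]
The gap is that you then stop short of actually checking the sign: you say it ``is governed by'' the super-Kirchhoff hypothesis and move on. But super-Kirchhoff \eqref{superK} says $\sum_{{\bf e}\sim v}\Phi'_{\bf e}(v^{+})\ge 0$, which makes your endpoint sum \emph{nonpositive} --- the wrong direction. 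It subtracts from, rather than reinforces, the nonnegative $\int\Phi^{2}|g'|^{2}$ term, and the argument does not close.

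This is not a repairable slip in your write-up: the lemma is false under the stated hypotheses. Take $\Gamma_{0}=[0,1]$ with Kirchhoff (i.e.\ Neumann) conditions at both endpoints, $\Phi(x)=1+x(1-x)$, and $f\equiv 1\in H^{1}$. Then $\Phi>0$ is $C^{2}$ and super-Kirchhoff at both ends (the outgoing derivatives are $\Phi'(0)=1$ and $-\Phi'(1)=1$), yet the left side of the claimed inequality is $0$ while the right side equals $\int_{0}^{1}\frac{2}{1+x-x^{2}}\,dx>0$. The paper's own proof contains a different error at the same step --- the displayed equality following ``Picone's inequality'' is not in fact an identity, as one checks by expanding $(\Phi\,(f^{2}/\Phi)')'$ --- so neither argument establishes the lemma as written. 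Your computation actually reveals the correct vertex hypothesis on $\Phi$: it should be the reversed inequality $\sum_{{\bf e}\sim v}\Phi'_{\bf e}(v^{+})\le 0$ at each Kirchhoff vertex. With that change (or, alternatively, with $f$ required to vanish at every Kirchhoff vertex) your proof goes through verbatim.
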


\begin{proof}
For notational simplicity, the proof is carried out in the case where $f$ is real valued.  According to Picone's inequality,
\begin{align*}
(f'(x))^2  &\ge \Phi'(x) \frac{d}{dx} \left(\frac{(f(x))^2}{\Phi(x)}\right)\\
& =  \frac{d}{dx}\left(\Phi(x) \frac{d}{dx}\left(\frac{(f(x))^2}{\Phi(x)}\right) \right) + (f(x))^2\left(\frac{-\Phi''(x)}{\Phi(x)}\right).
\end{align*}
When the first term in the last line is integrated on an edge ${\bf e}$, it contributes
$$-2 f(0+) f'(0+) + (f(0+))^2 \frac{\Phi'(0+)}{\Phi(0+)}$$
in the outgoing sense at both of the vertices bounding ${\bf e}$.  When all such contributions are summed at a given vertex,
the result is nonnegative according to the assumptions on $f$ and $\Phi$.
\end{proof}

\section{Landscape upper bounds on tunneling regions,\\
 using Agmon's method}\label{Agmon}

It is in the tunneling r\'egime $\mathcal{T}_E$ that
the estimation of eigenfunctions in terms of a landscape function
is at the same time the most explicit
and the tightest when compared with examples.  We thus start
by recalling and sharpening
some bounds derived with Agmon's method, which were first established for quantum graphs
in \cite{HaMa}.

The two central lemmas in \cite{HaMa} can be distilled into the
following pointwise identities for an Agmon function $F$, a smooth cutoff $\eta$, and a
real-valued function
$\psi$ satisfying $(H-E) \psi = 0$ on ${\rm supp}(\eta)$.  First:
\begin{equation}\label{1stid.a}
F^2(x)\eta(x) \psi(x) \left(H - E \right) \eta(x) \psi(x) =
F^2(x)\left(- \eta''(x) \psi^2(x)- \eta \psi(x) \eta'(x)  \psi'(x) \right),
\end{equation}
where the quantity on the right is supported
within $\rm{supp}(\eta') =: \mathcal{S}$, and can therefore be estimated in terms of
$\|\eta''\|_\infty$, $\|\eta'\|_\infty$, $\sup(F)\chi_{ \mathcal{S}}$,
and $\|\psi\|_{H^1(S)}$.  With a little algebraic juggling, we can rewrite
\eqref{1stid.a} so that
the derivatives $\psi'$ and
$\eta''$ do not appear:
\begin{equation}\label{1stid.b}
F^2(x)\eta(x) \psi(x) \left(H - E \right) \eta(x) \psi(x) = \psi^2(x) \left(\eta^\prime(x)\left(\eta(x) F^2(x)\right)^\prime\right) - G^\prime(x),
\end{equation}
except inside $G(x) = \eta(x) \eta^\prime(x) F^2(x) \psi^2(x)$, which
will be arranged to integrate to 0 on any edge
by requiring
the support of $\eta^\prime$ to lie within the edge.
In \eqref{1stid.b}
the quantity on the right
can therefore be estimated in terms of
$\|\eta'\|_\infty$, $\sup(F)\chi_{ \mathcal{S}}$,
$\sup(F')\chi_{ \mathcal{S}}$,
and $\|\psi\|_{L^2(S)}$.  In particular, this will allow us to relax the assumption that
$\eta \in C^2$ and choose it to be a ramp function below.

By a second direct calculation,
\begin{align}\label{2ndid}
F^2(x)\eta(x) &\psi(x) \left(H - E \right) \eta(x) \psi(x)
=\nonumber\\
&\quad \left( \left(F \eta \psi \right) ^\prime \right)^2 + {\left(V - E - \left(\frac{F^\prime}{F}\right)^2 \right)} \left(F \eta \psi \right)^2 -H^\prime(x),
\end{align}
where $H=F^2 \eta \psi \left( \eta \psi \right)^\prime$ will produce boundary contributions
when integrated,
but if $F$ is continuous and $\psi$ satisfies Kirchhoff conditions, they will sum to $0$.

Suppose initially that
for some $\ell > 0$ the finite, closed interval $[a-\ell,b+\ell ]$ is contained within an edge belonging to
$\mathcal{T}_E$.
In order to obtain estimates on $I = [a,b]$, let
\begin{alignat}{2}
F_E(x) &= 1, &\quad x \notin (a,b)\nonumber\\
F_E(x) &= e^{\min{\int_a^x \sqrt{V - E},\int_x^b \sqrt{V - E}}},
&\quad  x \in [a,b]
\end{alignat}
We let $\eta$ be a linear
ramp on $[a-\ell, a]$ and $[b, b+\ell]$, with $\eta = 0$ on $\mathbb{R}
\backslash [a-\ell, b+ \ell]$ and $\eta = 1$ on $I$. Then
equations \eqref{1stid.b} and \eqref{2ndid} yield that
\begin{equation}\label{e:intGamma}
\int_\Gamma \psi^2(x)\left(\eta'(x)\left(\eta(x)F_E^2(x)\right)'\right)
\ge
\int_\Gamma \left(\left(F_E\eta\psi\right)'\right)^2.
\end{equation}
Now fixing $x\in I$ we apply Cauchy-Schwarz to the right side:
\begin{equation}\label{e:ptwsbd}\begin{split}
\int_\Gamma \left(\left(F_E\eta\psi\right)'\right)^2
 &=
 \int_{a-\ell}^x \left(\left(F_E\eta\psi\right)'\right)^2 + \int_x^{b+\ell} \left(\left(F_E\eta\psi\right)'\right)^2
 \\
 &
 \geq \frac{ (\int_{a - \ell}^x \left(F_E\eta\psi\right)')^2}{x -a + \ell} + \frac{ (\int_x^{b+ \ell}
 \left(F_E\eta\psi\right)')^2}{b + \ell - x}
 \\
 &
= \frac{(F_E(x)\psi(x))^2}{x -a + \ell}+ \frac{(F_E(x)\psi(x))^2}{b + \ell - x}
\\
&
= \psi(x)^2 \left(\frac{F_E(x)^2(b - a + 2\ell)}{(x - a + \ell)(b + \ell - x)}\right).
\end{split}\end{equation}
This yields the estimate
\begin{equation}\label{e:calc}\begin{split}
|\psi(x)| &\leq \sqrt{\frac{(x - a + \ell)(b + \ell - x)}{b - a + 2\ell}}\\
&\times \frac{1}{F_E(x)} \left(\int_{\supp \eta'} \psi^2(y)\left(\eta'(y)\left(\eta(y)F_E^2(y)\right)' \right)dy\right)^{\frac 1 2 } \\
& = \sqrt{\frac{(x - a + \ell)(b + \ell - x)}{b - a + 2\ell}}\, \frac{1}{F_E(x)} \left(\int_{\supp \eta'} \psi^2(y) (\eta'(y))^2 dy\right)^{\frac 1 2 } \\
&\leq \sqrt{\frac{(x - a + \ell)(b + \ell - x)}{b - a + 2\ell}}\, \frac{\|\psi\|_{L^2([a-\ell, a]\cup[b, b+\ell])}}{\ell \, F_E(x)}.
\end{split} \end{equation}

We now extend this argument in two ways.  The first is to potentially allow the interval to be infinite, as was the case in \cite{HaMa}. We may parametrize
$I$ as  as $[a,\infty)$, in which case
$F_E$ can be simply defined on $I$ as
\[
F_E(x) := e^{\int_a^x\sqrt{V-E}}.
\]

In this case we can drop one of the contributions to the first line of \eqref{e:ptwsbd}, obtaining
\[
\int_\Gamma ((F_E\eta\psi)')^2 \geq \frac{(F_E(x)\psi(x))^2}{x - a + \ell}
\]
and thus for $x > a$,
\[
|\psi(x)|\leq \sqrt{x-a + \ell}\, F^{-1}_E(x) \frac{\|\psi\|_{L^2([a-\ell, a])}}{\ell}.
\]

Secondly, we extend the analysis to connected regions of the graph on which $V-E > 0$ as follows. Since we assume $V$ to be continuous, we know that the set $\mathcal{T}_E$ is open. It may consist of disconnected components, in which case we may restrict ourselves to working on one component at a time, so
without loss of generality we may assume that $\mathcal{T}_E$ is connected. Let the boundary of $\mathcal{T}_E$ (henceforth denoted $ \partial_{\mathcal{T}_E}$) be $\{b_1, \dots , b_m\}$. Note that $\partial \mathcal{T}_E$ is a finite collection of points, because we assume that $\gamma \backslash \mathcal{T}_E$ is compact, all degrees are finite, and all edges have a minimum length. We define
\begin{align}
F_E(x) &= \exp\left(\min_{1 \leq j \leq m}\; \min_{P: \text { paths }
b_j \text{ to } x } \int_P \sqrt{V - E}\right)\quad &\text{for }
x \in \mathcal{T}_E
\\
F_E(x) &=1 \quad &\text{for } x \notin \mathcal{T}_E
\end{align}
By construction, $F_E$ is again continuous.
For $x \in \mathcal{T}_E$ we can think of $F_E(x)$ as defining an
\emph{Agmon metric}
on $\mathcal{T}_E$,
\begin{equation}\label{Agdist}
\rho_A(x,y;E) := \min_{P: \text { paths } y \text{ to } x } \int_P \sqrt{V - E}.
\end{equation}
If $S$ is a set, $\rho_A(x,S,E)$ will denote the infimum of $\rho_A(x,y;E)$ for $y \in S$.

To define $\eta$, for each $j$ we parametrize the part of the edge containing $b_j$ and lying outside of $\mathcal{T}_E$ with $b_j$ mapped to 0. Then $\eta$ is taken as a ramp on each of $m$ segments $[0, \ell]$ associated to each point in $\partial_{\mathcal{T}_E}$ (denote them $[0, \ell]_{b_j}$) so that $\eta = 1$ on $\mathcal{T}_E$ and $\eta = 0$ on $\mathcal{T}_E^c \backslash \cup_j [0, \ell]_{b_j}$. This construction yields $\eta'(x) = -1/\ell$ on each of $[0, \ell]_{b_j}$.
With $\eta$ and $F_E$ in place, we carry out a similar calculation. Let $P$ be any path from any of the points $b_j \in \partial_{\mathcal{T}_E}$ to the point $x \in \mathcal{T}_E$.
\begin{equation}
\int_\Gamma ((F_E \eta \psi)')^2 \geq \int_P ((F_E \eta \psi)')^2 \geq \frac{\left(\int_P (F_E\eta \psi)'\right)^2}{|P|} = \frac{\left(F_E(x) \psi(x)\right)^2}{|P|}.
\end{equation}
We can then minimize over paths $P$ to obtain
an upper bound,
which decreases exponentially into the tunneling region.  This proves:
\begin{thm}\label{tunneldecr}
For $x \in \mathcal{T}_E$ with
$\dist(x, \partial \mathcal{T}_E) \ge \ell$,
\begin{equation}\label{Agub}
\left|\psi(x)\right| \leq \frac{\sqrt{\dist(x, \partial \mathcal{T}_E)}\|\psi\|_{L^2(\cup_{j= 1}^m \; [b_j, b_j + \ell])}}{\ell}
\exp(-\rho_A(x,\partial \mathcal{T}_E;E)).
\end{equation}
\end{thm}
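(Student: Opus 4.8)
The plan is to follow the template already established for the single-edge and half-line cases in Eqs.~\eqref{e:intGamma}--\eqref{e:calc}, but now with the path-minimizing Agmon function $F_E$ and the multi-segment ramp $\eta$ that have just been defined on the connected tunneling region $\mathcal{T}_E$. First I would record that, because $\psi$ satisfies $(H-E)\psi=0$ on all of $\mathcal{T}_E\supset\supp(\eta)$ and satisfies Kirchhoff conditions at the vertices, while $F_E$ is continuous and $\eta$ vanishes near $\partial_{\mathcal{T}_E}$, the pointwise identities \eqref{1stid.b} and \eqref{2ndid} integrate over $\Gamma$ with all the divergence terms $G'$ and $H'$ contributing zero: the $G$-terms vanish because $\supp(\eta')$ sits strictly inside edges, and the $H$-terms cancel at vertices by Kirchhoff. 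Dropping the manifestly nonnegative term $(V-E-(F_E'/F_E)^2)(F_E\eta\psi)^2$ (which is $\ge 0$ precisely because $F_E'=F_E\sqrt{V-E}$ wherever $F_E\ne 1$, and is $\ge 0$ trivially where $F_E\equiv 1$) then gives the energy inequality \eqref{e:intGamma} verbatim in this generality.

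Next I would exploit connectedness of $\mathcal{T}_E$. Fix $x\in\mathcal{T}_E$ with $\dist(x,\partial\mathcal{T}_E)\ge\ell$. Choose any path $P$ running from some boundary point $b_j$, through its ramp segment $[0,\ell]_{b_j}$, and then to $x$ inside $\mathcal{T}_E$. Along $P$ the function $F_E\eta\psi$ starts at $0$ (at the far end of the ramp) and ends at $F_E(x)\psi(x)$, so by Cauchy--Schwarz $\int_P((F_E\eta\psi)')^2 \ge (F_E(x)\psi(x))^2/|P|$, and since $\int_\Gamma((F_E\eta\psi)')^2\ge\int_P((F_E\eta\psi)')^2$ we get $\int_\Gamma((F_E\eta\psi)')^2\ge (F_E(x)\psi(x))^2/|P|$. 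Combining with \eqref{e:intGamma} and evaluating the left-hand side exactly as in \eqref{e:calc} — where $\eta'\equiv -1/\ell$ and $F_E\equiv 1$ on each ramp segment $[0,\ell]_{b_j}$, so that $\eta'(\eta F_E^2)' = (\eta')^2 = 1/\ell^2$ there — yields
\[
|\psi(x)| \le \frac{\sqrt{|P|}}{F_E(x)}\,\frac{\|\psi\|_{L^2(\cup_j[b_j,b_j+\ell])}}{\ell}.
\]
Finally I would minimize the right-hand side over admissible paths $P$: the path length splits as $|P| = \ell + (\text{length inside }\mathcal{T}_E)$, and taking the infimum over $b_j$ and over paths from $b_j$ to $x$ replaces the in-region portion by $\dist(x,\partial\mathcal{T}_E)$ in the $\sqrt{|P|}$ factor while $F_E(x)^{-1} = \exp(-\rho_A(x,\partial\mathcal{T}_E;E))$ by the very definition of $F_E$. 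This delivers \eqref{Agub}.

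The one point requiring a little care — and the likeliest source of a hidden gap — is the compatibility of the path used for the Cauchy--Schwarz step with the definition of $F_E$: the path realizing (nearly) the Agmon distance $\rho_A(x,\partial\mathcal{T}_E;E)$ must be the \emph{same} path along which we estimate $\int_P((F_E\eta\psi)')^2$, and it must enter $\mathcal{T}_E$ through exactly one ramp segment so that the boundary value $F_E\eta\psi=0$ is attained at its initial endpoint. Since $\mathcal{T}_E$ is open with finite boundary $\{b_1,\dots,b_m\}$ and every edge has length $\ge L_{\min}$, one can always prepend the length-$\ell$ ramp at the relevant $b_j$ to a near-geodesic, so this is harmless; but it should be stated explicitly. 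A secondary subtlety is that $F_E$ defined via a minimum over paths need not be $C^2$, only continuous and piecewise smooth with $|F_E'|=F_E\sqrt{V-E}$ a.e.\ on $\mathcal{T}_E$; this is exactly why identity \eqref{1stid.b} (which avoids $\eta''$) and the nonnegativity of $V-E-(F_E'/F_E)^2$ are invoked in their integrated, almost-everywhere forms rather than pointwise, and the argument goes through without change.
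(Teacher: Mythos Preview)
Your proposal follows the paper's argument essentially step for step: the integrated identities \eqref{1stid.b}--\eqref{2ndid}, the Cauchy--Schwarz bound along a path $P$ from the end of a ramp to $x$, the evaluation $\eta'(\eta F_E^2)'=(\eta')^2$ on the ramp segments, and the final minimization over paths are exactly what the paper does (indeed the paper compresses the whole graph case into ``we carry out a similar calculation'' plus the displayed path inequality).

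One explicit claim in your write-up is not correct, however. You assert that $(V-E-(F_E'/F_E)^2)(F_E\eta\psi)^2$ is ``$\ge 0$ trivially where $F_E\equiv 1$''. But the ramp segments $[0,\ell]_{b_j}$ are, by the paper's own construction, located \emph{outside} $\mathcal{T}_E$, so $V-E\le 0$ there while $F_E'=0$; hence the integrand equals $(V-E)(\eta\psi)^2\le 0$ on the ramps, not $\ge 0$. The paper's text does not address this point either---its ``similar calculation'' tacitly imports \eqref{e:intGamma}, which in the earlier single-interval setup was justified because the ramps $[a-\ell,a]\cup[b,b+\ell]$ were assumed to lie \emph{inside} $\mathcal{T}_E$. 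So you are not deviating from the paper, but your attempt to spell out the nonnegativity has exposed a detail that genuinely needs more care (e.g., placing the ramps just inside $\mathcal{T}_E$ as in the interval case, or retaining the ramp contribution as an extra term on the right).
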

For a normalized wavefunction we can simplify by bounding
$\|\psi\|_{L^2(\cup_{j= 1}^m  [b_j, b_j + \ell])}$ above by 1.
We caution that, unlike the upper bound of Theorem~\ref{tunneldecr}, the magnitude of the wave function itself may, and frequently does, change monotonically
at an exponential rate
throughout a barrier.  Of course, if it does so, normalization forces it to be
exponentially small on one side or other of the barrier.

In some circumstances, a different choice of $F$ can provide a slightly improved upper bound with Agmon's method.

Now fix some $\delta > 0$, and consider the set
$\mathcal{T}_{E+ \delta} \setminus \mathcal{T}_{E+ 2 \delta}$.
Each connected component of this set contains
a vertex-free interval of length $\ge L(\delta)$ for some $L(\delta)> 0$,
the value of which we consider among the ``accessible'' properties of a quantum graph.

Integrating \eqref{2ndid} and letting $Q^2 := \left(V - E - \left(\frac{F_{E-\delta}'}{F_{E-\delta}}\right)^2\right) \ge \delta$, we get
\begin{align}\label{lowerbd}
\int_{\Gamma}{\left(((\eta F_{E-\delta} \psi)^\prime)^2 + Q^2(\eta F_{E-\delta} \psi)^2\right)}
&= \int_{\Gamma}{Q\left(\frac{((\eta F_{E-\delta} \psi)^\prime)^2}{Q} + Q(\eta F_{E-\delta} \psi)^2\right)}\nonumber\\
&\ge \sqrt{\delta} \int_{I_0}{\left(\frac{((\eta F_{E-\delta} \psi)^\prime)^2}{Q} + Q(\eta F_{E-\delta} \psi)^2\right)}\nonumber\\
&\ge \sqrt{\delta} \left|\int_{I_0}{\left((\eta F_{E-\delta} \psi)^2\right)^\prime}\right|,
\end{align}
where ${I_0}$ is any subset of $\mathcal{T}_{E+ \delta}$.  (The final line used the
arithmetic-geometric mean inequality, $a^2 + b^2 \ge \pm 2 a b$.)
In order to estimate $\psi(x)$ for $x \in \mathcal{T}_{E+ 2 \delta}$, we
make a specific choice of $\eta \in C^2$ and $I_0$ as follows.
\begin{enumerate}
\item
$I_0$ is a vertex-free interval of length $\ge L(\delta)$.
\item
$\supp(\eta^\prime(x)) \subset
I_0 \cup I_1 \subset \mathcal{T}_{E+ \delta} \setminus \mathcal{T}_{E+ 2 \delta}$, where
$I_1$ is a finite (possibly empty) union of disjoint vertex-free intervals,
such that any path from
$x$ to the complement of $\mathcal{T}_{E+ \delta}$ passes through
$I_0 \cup I_1$.
\item
$\eta(x) = 1$
\item
$\eta(y) = 0$ for all $y$ that cannot be connected to $x$ without passing through $I_0$.
\end{enumerate}

Applying the Fundamental Theorem of Calculus to the lower side of
\eqref{lowerbd} and invoking the equivalence of  \eqref{1stid.b} and \eqref{2ndid}, we see that
\begin{equation*}
\left(F_{E-\delta}(x) \psi(x)\right)^2 \le \frac{1}{\sqrt{\delta}} \int_{I_0}{\psi^2 F_{E-\delta}^2 \left(\eta'^2 + (\eta^2)' \left(\frac{F_{E-\delta}'}{F_{E-\delta}}\right)\right)}.
\end{equation*}
The smoothness required of  $\eta$
can now be relaxed by passing to a sequence of $\eta_k$ tending uniformly to linear ramp functions
increasing from $0$ to $1$ on a subinterval of $I_0$
of length at least $L(\delta)$.  Hence we conclude that
\begin{align*}
\left(F_{E-\delta}(x) \psi(x)\right)^2 &\le  \frac{1}{\sqrt{\delta}}
\max_{I_0}\left(F_{E-\delta}^2 \left(\left(\frac{1}{L(\delta)}\right)^2 + \frac{1}{L(\delta)} \sqrt{V-E - \delta} \right)\right) \|\psi\|_{I_0}^2\\
&\le  \frac{1}{\sqrt{\delta}} e^{2 \sqrt{\delta} L(\delta)}\left(\left(\frac{1}{L(\delta)}\right)^2 + \frac{\delta}{L(\delta)}\right) \|\psi\|_{I_0}^2.
\end{align*}

\section{Construction of landscape functions on a graph\\ via
a simplified torsion function}\label{landscape}

Here and in \S \ref{trans} we shall discuss ways to construct
landscape functions valid when $E \ge V(x)$ (but not by too much),
thus complementary to the bounds of
\S \ref{Agmon}.

For scalar Schr\"odinger operators on domains, the
original choice by Filoche and Mayboroda for their
 ``landscape function,''
is a
sufficiently large multiple of a
positive solution $T(x)$ of
\begin{equation}\label{stdlsf}
(-\Delta + V(x))T(x) = 1,
\end{equation}
e.g., \cite{FiMa1,FiMa2,Ste}. This is a
Schr\"odinger variant of the
{\em torsion function} (cf. \cite{BLMS,RaSt,vdB12,vdB17})
A sufficiently large multiple of $T(x)$ will provide a pointwise bound on an eigensolution
$|\psi(x)|$
on some region $R$, through a maximum-principle argument.
The bound will depend on the eigenvalue and on
the values of $|\psi(x)|$ on $\partial R$.

There are two common drawbacks to landscape functions of torsion-function type.
The first is that,
typically, such landscape functions become trivial for large eigenvalues $E$, by which we mean that
the upper bound thus obtained may on some regions exceed
known uniform upper
bounds on $\|\psi\|_\infty$, e.g.,  as in Proposition~\ref{linftybd1}.
In this situation the upper bounds usually also lack
useful dependence on the position $x$.
This is an intrinsic difficulty
for the method in a region where the eigenfunction oscillates.  It is hard to see how
a necessarily positive upper bound will take full avantage of the
fact that such an eigenfunction has zeroes.
(An alternative and more effective
approach to pointwise control
of rapidly oscillating eigenfunctions
incorporates their derivatives, cf.\
Theorem \ref{vopbd}, below.)

Consider for instance
the simplest situation, an ordinary differential equation
with periodic boundary conditions, $\psi(x+L) = \psi(x)$.  At large energies $E \gg V(x)$ we can approximate
by dropping $V$,
so that the normalized real-valued eigenfunctions are well approximated by
\[
\sqrt{\frac{2}{L}} \cos(\sqrt{E} x - \phi), \quad E = \left(\frac{2 \pi m}{L}\right)^2,
\]
and by appropriate choice of the phase $\phi$ the position of the maximal value
can be placed at will.
In addition to this elementary limitation
on the use of landscape functions, when we adapt them to
quantum graphs
there are further barriers to their use arising from the connectedness of the graph,
as shown in Case Studies
\ref{scarycasestudy} and \ref{scarycasestudy2}.

A second drawback to landscape functions based on \eqref{stdlsf} is that, usually,
the torsion function and its variants are
only computationally known.  As we shall elaborate below in the context of quantum graphs,
however, due to the maximum principle it suffices in lieu of
\eqref{stdlsf} to have an inequality
\begin{equation}\label{lsfineq}
H T = (-\Delta + V(x))T(x) \ge 1,
\end{equation}
The flexibility of an inequality allows
more accessible or even explicit choices of landscape functions,
without losing qualitative features.

Before showing how to construct explicit, elementary functions satisfying
\eqref{lsfineq} on
quantum graphs, which is done below, let us describe how $T$ can be used to provide a landscape function in two different ways.

In both cases we suppose that \eqref{lsfineq} holds on some $\Gamma_0 \subset \Gamma$, with Kirchhoff conditons
at the vertices of $\Gamma_0$

We consider
\[
W(x,\pm) := \pm \psi(x) - E \|\psi\|_{L^\infty(\Gamma_0)} T(x),
\]
where we shall consider both signs in order to bound $|\psi|$.
We see that
\begin{equation}
H W(x,\pm) = E \left(\pm \psi(x) - \|\psi\|_{L^\infty(\Gamma_0)}\right) \le 0.
\end{equation}
We now apply the maximum principle Lemma \ref{maxprin} to $W(x,\pm)$
for both signs,
concluding that $W(x) := |\psi(x)| - E \|\psi\|_{L^\infty(\Gamma_0)} T(x)$ is maximized on the boundary
of the region on which \eqref{lsfineq} holds.
We thus obtain
\begin{equation}\label{ttlsfn}
|\psi(x)| \leq \max_{x \in \partial \Gamma_0}W_+(x) + E \|\psi\|_{L^\infty(\Gamma_0)}T(x),
\end{equation}
and hence if $W = 0$ on $\partial \Gamma_0$ then
$$
\Upsilon_{\max \rm princ.}(x) := E \|\psi\|_{L^\infty(\Gamma_0)}T(x)
$$
is a landscape function in the sense of
\cite{FiMa1,Ste}.  Of course, this is only interesting for $x$ such that $E T(x) < 1$ or when $\psi$ is known
{\em a priori}
to be small on $\Gamma_0$.

The second way to build a landscape function out of $T(x)$, following ideas that have been used in the case of domains
\cite{Ste,ADJMF,ADFJM},
is to use Lemma
\ref{Boggio}.  Since
\begin{equation}\label{UpsiAgmon}
\frac{H \Upsilon(x)}{\Upsilon(x)} \ge \frac{1}{\Upsilon(x)},
\end{equation}
which is positive, we can use the method of \S \ref{Agmon}
to obtain Agmon-type bounds on parts of $\Gamma_0$
that extend beyond the tunneling region. In particular,
using $f = \eta F \psi$ in Lemma \ref{Boggio} and inserting
\eqref{UpsiAgmon} into \eqref{2ndid}, we find that
\[
\left(\frac{1}{\Upsilon}- E - \left(\frac{F^\prime}{F}\right)^2 \right) \left(F \eta \psi \right)^2 -H^\prime(x)
\le \psi^2(x) \left(\eta^\prime(x)\left(\eta(x) F^2(x)\right)^\prime\right) - G^\prime(x),
\]
where $G'$ and $H'$ will integrate to 0.  This allows us to chose
\begin{equation}\label{AgmonUps}
F(x) = \exp\left(\int_{x_0}^x\sqrt{\frac{1}{\Upsilon}- E -\delta}\right)
\end{equation}
on any region where $\frac{1}{\Upsilon}- E \ge \delta$, with $\eta$ supported in the same region,
and proceed as before.
In this manner,
bounds based on Agmon's method are obtainable in
parts of $\mathcal{C}_E$ where
$V(x) < E < \frac{1}{\Upsilon(x)}$.

We next turn to the construction of a
torsion-type landscape function on a quantum graph,
considering
first the case of
a set of abutting intervals $[x_i,x_{i+1}]$,
containing no vertices.  Suppose $x_i < y_i< x_{i+1}$ and that
on this interval
\begin{equation}\label{quadlb}
V(x) \ge V_i + b_i^2 (x-y_i)^2,
\end{equation}
with $b_i \ge 0$,
$V_i \ge 0$.
We can construct a landscape function on this interval in the form of a Gaussian function plus a constant, as follows.
We temporarily set $i=1$, $y_1=0$, $b_i=b$ for simplicity.
If $b > 0$,
define
$\Upsilon_0(x) := A\left[\frac12 +\exp(-bx^2/2)\right]$.
Then
\begin{align}\label{basiclsf}
\left[-\frac{d^2}{dx^2} + V\right] \Upsilon_0&\ge
\left[  -\frac{d^2}{dx^2} + V_1 + b^2 x^2\right]\Upsilon_0\nonumber\\
&=
A \left[\frac{V_1+ b^2 x^2}{2} + (b+ V_1) e^{-bx^2/2} \right].
\end{align}
We want to assign $A$ the minimal possible value so that the right side of
\eqref{basiclsf} $\ge 1$ on the interval $[x_1,x_2]$. To do so we find the minimum of
\[f(x):=\frac{V_1+ b^2 x^2}{2} + (b+ V_1) e^{-bx^2/2}
\]
on $[x_1,x_2]$. Taking the derivative and setting it to 0 we obtain
\[
b^2x - bx(b+V_1)e^{-bx^2/2} =0
\]
Since $b + V_1 > 0$ by assumption, the minima occur at \[
x^2 = -\frac{2}{b} \ln \frac{b}{b+V_1}
\]
and the value of such a minimum
is $\frac{V_1}{2} - b \ln \frac{b}{b+V_1} + b$. When $-\frac{2}{b} \ln \frac{b}{b+V_1}> 0$ we can obtain a real value for the minimizer,  which gives
\[
\frac{1}{A} = \min\left\{\frac{V_1}{2} - b \ln \frac{b}{b+V_1} + b, f(x_1), f(x_2)\right\}.
\]


We illustrate this construction in Case Study \ref{maybebetter}. We also observe that with a slight weakening of the inequality,
an explicit value of $A$ can be assigned
using the fact that that $e^{-y} + y\ge \max\{1,y\}$ for all $y > 0$,
\emph{viz}.,
\begin{equation}\label{e:A0est}
A_0 = \frac{1}{b+\frac{V_1}2}.
\end{equation}

When $b=0$, $\Upsilon_0$ can be chosen, for example,
as an elementary quadratic
of the form $a_1 - b_1 x^2$, such that
$V_1 (a_1 - b_1 x^2) + 2 b_0 \ge 1$ on $[x_1, x_2]$.
If $V_1$ is large it may even suffice for these purposes to
choose $b_1 = 0$, i.e., $\Upsilon_0$ may be constant on $[x_1, x_2]$.
In practice, where $b_1=0$ the upper bound given by
a quadratic
$\Upsilon_0$ will often either be weaker than the Agmon estimate, when applicable,
or,
as illustrated in Case Studies
\ref{scarycasestudy} and \ref{scarycasestudy2}, no better than the uniform bound of
Proposition~\ref{linftybd1}.  It is included here only to ensure that a single, seamless landscape function
can be constructed on a set of concatenated intervals.

Letting $T(x) = \Upsilon_0(x)$ on $[x_1,x_2]$ we obtain from \eqref{ttlsfn} that
\begin{equation}\label{Ups0ub}
|\psi(x)| \leq \max\{W_+(x_1), W_+(x_2)\} + E \|\psi\|_{L^\infty([x_1, x_2])}\Upsilon_0(x).
\end{equation}
For the bound \eqref{Ups0ub}
to be nontrivial, we want $E$ to be small in comparison
with $b+\frac{V_1}{2}$
and we shall need to address the boundary values at
$x_{1,2}$.

First, however, we show how to concatenate the construction of a landscape function in a multiple-well region.  Suppose now that $V(x)$ satisfies inequalities
of the form \eqref{quadlb} on the interval $[x_1,x_2]$ with $y=y_1$, the analogous inequality on the interval $[x_2,x_3]$ with $y=y_2$, etc.
The landscape functions as constructed above will be denoted $\Upsilon_0(x;b_i,V_i,y_i)$.  They do not
{\em a priori}
define a $C^1$ function at the ends of the
intervals $x_i$, but that problem can be fixed.

\begin{itemize}
\item[$\bullet$]
Step 1.  Beginning with $\Upsilon_0(x_i;b_i,V_i,y_i)$ as defined above,
we first ensure that the derivatives are zero at the end points of its interval by
adding functions of the form
\[
\frac{\epsilon}{2}  |\Upsilon_0^\prime(x_i;b_i,V_i,y_i)| \left(1 - \frac{|x - x_i|}{\epsilon}\right)^2 \chi_{[x_i,x_i +\epsilon]}(x),
\]
resp.
\[
\frac{\epsilon}{2}  |\Upsilon_0^\prime(x_{i+1};b_i,V_i,y_i)| \left(1 - \frac{|x - x_{i+1}|}{\epsilon}\right)^2 \chi_{[x_{i+1}-\epsilon,x_{i+1}]}(x)
\]
for $\epsilon$ small enough that the supports of these functions are contained in $(x_{i-1},x_{i+1})$.
Evidently, the quantity $\epsilon$ may be chosen in some convenient and roughly optimal way, depending on the parameters $x_i, y_i, b_i, V_i$, and
need not have the same value at the two ends.  We denote the sum of these two local quadratic functions $\rho_i(x)$.

\item[
$\bullet$]
Step 2:  Add positive constants $c_i \chi_{[x_i, x_{i+1}]}$ on a subset of the intervals $[x_i, x_{i+1}]$ in order make the concatenated function continuous.
(Although we are describing here a universal way to piece together the landscape construction,
in individual cases a good alternative to adding constants is often to choose the ends of the intervals $x_i$ in advance so that
$\Upsilon_0(x_i;b_{i-1},V_{i-1}$, $y_{i-1}) = \Upsilon_0(x_i;b_i,V_i,y_i)$,
making use of the fact that $\Upsilon_0(x;b_{i-1},V_{i-1},y_{i-1})$ decreases as $x \uparrow x_i$, whereas
$\Upsilon_0(x;b_i,V_i,y_i)$ increases as $x$ increases beyond $x_i$.)

\item[
$\bullet$]
Step 3.  If necessary,
an overall constant $c_0$ is also added to ensure that $H \Upsilon \ge 1$ after Step 1 has been carried out.
\end{itemize}

The explicit expression
\begin{equation}\label{interval.lsf}
\Upsilon(x) := c_0+ \sum_i{(\Upsilon_0(x;b_i,V_i,y_i)+ c_i) \chi_{[x_i, x_{i+1}]}+\rho_i(x)}
\end{equation}
then has all the properties required of a landscape function on a sequence of abutting intervals, in the absence of vertices.

When adapting this construction to quantum graphs, in the vicinity of vertices
we use star-graphs instead of intervals.
When we overlay a subgraph with abutting ingtervals and star-graphs, we
must take into account the vertex conditions and the possibility of closed loops. For the purpose of constructing a consistent landscape function, we impose additional symmetry conditions on our star-graphs.

\begin{thm}
Any connected subset $\mathcal{S}$
of a quantum graph
can be overlaid in an algorithmic manner
with abutting intervals and star graphs
on which
a function of the form \eqref{interval.lsf} can be defined, in terms of which
$|\psi(x)| - \Upsilon(x)$
does not have a local maximum on $\text{int}(\mathcal{S})$.
\end{thm}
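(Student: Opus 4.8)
The plan is to build $\Upsilon$ out of local pieces --- abutting intervals away from the vertices and small star graphs centered at the vertices --- and to splice them together by Steps~1--3 so that the result is $C^1$ across the overlay, $C^2$ on each overlay edge (inserting, if one insists, extra degree-two vertices at the finitely many points where $\Upsilon''$ jumps), satisfies $H\Upsilon\ge 1$ on every edge, and has $\sum_{{\bf e}\sim p}\Upsilon'_{\bf e}(p^+)=0$ at every vertex $p$ of the overlay. Granted this, the conclusion is exactly the argument preceding \eqref{ttlsfn}: for either sign, on edges $H\bigl(\pm\psi-E\|\psi\|_{L^\infty(\mathcal S)}\Upsilon\bigr)=\pm E\psi-E\|\psi\|_{L^\infty(\mathcal S)}H\Upsilon\le E\bigl(|\psi|-\|\psi\|_{L^\infty(\mathcal S)}\bigr)\le 0$, while at each overlay vertex $v$ the outgoing derivatives of $\pm\psi-E\|\psi\|_{L^\infty(\mathcal S)}\Upsilon$ sum to $\pm\sum_{{\bf e}\sim v}\psi'_{\bf e}(v^+)-E\|\psi\|_{L^\infty(\mathcal S)}\sum_{{\bf e}\sim v}\Upsilon'_{\bf e}(v^+)=0$ by the Kirchhoff condition on $\psi$ together with the vanishing arranged for $\Upsilon$; hence Lemma~\ref{maxprin} forbids a strict local maximum of $\bigl(\pm\psi-E\|\psi\|_{L^\infty(\mathcal S)}\Upsilon\bigr)_+$ in $\text{int}(\mathcal S)$, and taking the pointwise maximum over the two signs shows that $|\psi(x)|-E\|\psi\|_{L^\infty(\mathcal S)}\Upsilon(x)$ has no strict local maximum there. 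Since $E\|\psi\|_{L^\infty(\mathcal S)}\Upsilon$ is again of the form \eqref{interval.lsf} up to the overall scalar, this is the assertion, in the form in which Lemma~\ref{maxprin} delivers it.

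For the covering: under the standing hypotheses $\mathcal S$ meets only finitely many edges and vertices in any compact set, so we may take $\mathcal S$ to be a finite subgraph (the general case is the same local recipe). Around each vertex $v\in\mathcal S$ place the star $\mathcal S_v:=\{x:\dist(x,v)<r\}$ for a fixed $r<L_{\min}/2$; these are pairwise disjoint and each is a genuine star graph. What remains of $\mathcal S$ is a finite union of closed subintervals of edges, which we subdivide into abutting intervals $[x_i,x_{i+1}]$ so that a bound of the form \eqref{quadlb} holds on each --- always possible, in the worst case trivially with $b_i=V_i=0$, at the cost of a useless bound there.

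On the interval pieces we use \eqref{interval.lsf} verbatim. On a star $\mathcal S_v$ the symmetry condition on star graphs is precisely what makes the construction work at a vertex: we put the \emph{same} radial profile $\Upsilon_0^{(v)}\bigl(\dist(\cdot,v)\bigr)$ --- of Gaussian-plus-constant form, or the quadratic $a-b_1x^2$ if no nondegenerate \eqref{quadlb} centered at $v$ is available --- on every half-edge, with its parameters fixed via \eqref{basiclsf} so that $H\Upsilon_0^{(v)}\ge 1$ on each half-edge. Since this profile is even in the radial variable, $(\Upsilon_0^{(v)})'(0^+)=0$, so $\sum_{{\bf e}\sim v}(\Upsilon_0^{(v)})'_{\bf e}(v^+)=0$ --- which is stronger than the super-Kirchhoff condition \eqref{superK} and is exactly the vanishing needed above for the comparison function at $v$ --- while continuity at $v$ is automatic. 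We then run Step~1 only at the \emph{outer} ends of the star half-edges and at both ends of every interval piece; the flattening corrections $\rho_i$ are supported near those outer ends and hence do not disturb the derivative at $v$, so afterwards $\Upsilon$ is $C^1$ across the whole overlay with $\sum_{{\bf e}\sim p}\Upsilon'_{\bf e}(p^+)=0$ at every overlay vertex $p$ (degree-two junctions included, where $-\Upsilon'(x_i^-)+\Upsilon'(x_i^+)=0$). Step~2 then fixes the values: on the tree part of $\mathcal S$ one propagates the additive constants $c_i$ outward from a root, and around each independent cycle the matching condition is a single linear equation, solvable because each piece of the cycle carries its own free constant. Step~3 adds one global constant $c_0$ to restore $H\Upsilon\ge 1$: Step~1 lowered $H\Upsilon$ by bounded amounts that a positive constant offsets wherever $V>0$, while on the at-most-trivially-useful pieces where $V$ vanishes one instead keeps the requisite margin in the concave quadratic from the start, which a further subdivision permits. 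All summands in \eqref{interval.lsf} being positive, $\Upsilon>0$ throughout.

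The main obstacle is the splicing: one must produce a \emph{single} $C^1$ function on the overlay that simultaneously keeps $H\Upsilon\ge 1$ on every edge and has the outgoing derivatives of $\Upsilon$ sum to zero at the genuine graph vertices (so that the comparison with $\pm\psi-E\|\psi\|_{L^\infty(\mathcal S)}\Upsilon$ through Lemma~\ref{maxprin} is legitimate), and this must remain compatible with the global constraints imposed by cycles of $\mathcal S$. The symmetric choice of the star profiles reconciles the competing demands at the vertices; the free additive constants (one per piece) balanced against the finitely many cycle-closing equations, together with the final constant $c_0$, reconcile them globally; the technical heart is to verify that all these choices can be made consistently while preserving $H\Upsilon\ge 1$ and the positivity of $\Upsilon$, the rest being a direct invocation of Lemma~\ref{maxprin}.
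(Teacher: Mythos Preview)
Your approach is essentially the paper's: cover $\mathcal S$ by symmetric star pieces near vertices and interval pieces elsewhere, flatten derivatives at the junctions via the quadratic corrections $\rho_i$, add per-piece constants $c_i$ for continuity, and finally a global $c_0$ to restore $H\Upsilon\ge 1$. Your choice to center the star profile at the vertex itself (so that each outgoing derivative of $\Upsilon$ at $v$ vanishes) is in fact exactly what the paper does when it turns to the cycle construction, and is cleaner than the ``privileged edge'' variant the paper describes first.

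There is, however, a gap in your cycle argument. You claim the matching around a cycle is ``a single linear equation, solvable because each piece of the cycle carries its own free constant.'' But once you propagate the $c_i$ along a spanning tree, a non-tree piece $P$ has \emph{both} endpoint values prescribed by the tree; matching at both ends gives two equations in the single unknown $c_P$. Equivalently, traversing the cycle once yields the constraint $\sum_i\bigl(a_i^{\text{right}}-a_i^{\text{left}}\bigr)=0$ on the \emph{pre}-shift endpoint values, and this does not involve the $c_i$ at all---so having ``a free constant on each piece'' does not help. The paper sidesteps this by requiring \emph{every} covering piece, intervals included, to be symmetric about its center $y_i$; then $a_i^{\text{left}}=a_i^{\text{right}}$ for each $i$, the cycle constraint is vacuous, and one can simply subtract the common endpoint value on each piece so that all junctions match at $0$. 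Since your interval pieces are not required to be symmetric, the cycle-closing step as written does not go through. The fix is immediate: further subdivide so that each $[x_i,x_{i+1}]$ is symmetric about its $y_i$, or adopt the paper's ``normalize all endpoint values to zero'' device.
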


\begin{rem}
The construction in this theorem will be illustrated
on a small scale in Case Study \ref{Mathieu}.
An interesting situation arises when $\partial\mathcal{S} \subset \mathcal{T}_E$, because then the boundary values
can be controlled
by the Agmon estimates of the previous section.
\end{rem}

\begin{proof}
The vertices do not pose much difficulty in adapting \eqref{interval.lsf}, because the maximum principle of Lemma \ref{maxprin} applies with
super-Kirchhoff vertex conditions.  We can and shall exclude intervals for which vertices occur at the endpoints.  Each vertex
can thus be regarded as interior to a
subinterval of a pair of its edges.  If it should happen that the vertex $v$ coincides with a maximal point $y_i$ for which an estimate
\eqref{quadlb} holds on a set of subintervals of all pairs of the edges incident to $v$, then $\pm \psi - C \Upsilon_0$ satisfies the Kirchhoff conditions at
$v$, where $\Upsilon_0$ is defined as above on each of those subintervals.  In this circumstance, we can proceed as above.
Otherwise, for any given $v$ we privilege one of the adjacent edges, ${\bf e}_p$ to contain a value $y_i$ with respect to which
an inequality of the form \eqref{quadlb} holds on subintervals of ${\bf e}_p \cup {\bf e}$ uniformly for all edges
${\bf e} \ne {\bf e}_p$ incident to $v$.  We now choose the function $\Upsilon_0$ constructed above identically on each of these
subintervals of ${\bf e}_p \cup {\bf e}$.  Because $\Upsilon_0$ decreases outward from $v$ along each ${\bf e} \ne {\bf e}_p$
and only increases from $v$ along ${\bf e}_p$, any function of the form
$\pm \psi - C \Upsilon_0$ satisfies super-Kirchhoff conditions at
$v$, and the maximum principle applies.

Next we arrange that the landscape function constructed on a concatenated set of intervals and star graphs remains
$C^1$ even when the intervals compose a closed cycle.

\begin{itemize}
\item[$\bullet$]  Step 1.  First, we may restrict ourselves to
using star-graphs in the covering of $\mathcal{S}$
that a) contain no more than one vertex,
and b) are symmetric with respect to $y_i$.
Consequently, the functions $\Upsilon_0(x;b_i,V_i,y_i)$ on these star-graphs will be
symmetric in $x$ with respect to reflection through $y_i$.

\item[$\bullet$]  Step 2.  On each star-graph, on neighborhoods of its ends we add quadratic functions $\rho_i$ as defined above, to ensure that
$\Upsilon_0^\prime(x;b_i,V_i,y_i) + \rho_i^\prime(x) = 0$ when $x$ is at an endpoint.

\item[$\bullet$]  Step 3.  We subtract a constant on each star-graph so that $\Upsilon_0(x;b_i,V_i,y_i) + \rho_i(x) -c_i = 0$ at the
endpoints.  The resulting functions compose a $C^1$ function on all of $\mathcal{S}$.

\item[$\bullet$]  Step 4.  We now add a single constant $c_0$ on $\mathcal{S}$ sufficiently large to ensure that
$H \Upsilon \ge 1$ for all $x \in \mathcal{S}$.
\end{itemize}

\end{proof}

\section{Landscape functions in the high-energy r\'egime}\label{high}

A good tool for controlling high-energy eigenfunctions is
a theorem of Davies \cite{Dav83} using a differential inequality:
\begin{thm}[Davies] \label{vopbd}
Given a real-valued solution of \eqref{EVE} on an edge ${\bf e}$ and $E_m < E$, define
$$g(x,E,E_m) := (\psi(x))^2 +\frac{(\psi^\prime(x))^2}{E-E_m}.$$
Then
for $x,y \in {\bf e}$, choosing a parametrization so that $x \ge y$,
\begin{equation}\label{Davbd}
g(x,E,E_m) \le g(y,E,E_m) \exp\left(\frac{1}{\sqrt{E-E_m}} \int_y^x|V(t) - E_m| dt\right).
\end{equation}
\end{thm}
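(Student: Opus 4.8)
The plan is to control the growth of $g(x) := g(x,E,E_m)$ by differentiating it and bounding its logarithmic derivative. First I would compute, using $\psi'' = (V - E)\psi$ from \eqref{EVE},
\[
g'(x) = 2\psi\psi' + \frac{2\psi'\psi''}{E - E_m}
= 2\psi\psi' + \frac{2(V-E)}{E-E_m}\psi\psi'
= \frac{2(V - E_m)}{E - E_m}\,\psi\,\psi'.
\]
Thus $|g'(x)| \le \dfrac{2|V(x) - E_m|}{E - E_m}\,|\psi(x)|\,|\psi'(x)|$. The key elementary step is the arithmetic-geometric mean estimate
\[
2|\psi(x)|\,|\psi'(x)| \le \sqrt{E - E_m}\,\psi(x)^2 + \frac{1}{\sqrt{E-E_m}}\,\psi'(x)^2
= \sqrt{E-E_m}\left(\psi(x)^2 + \frac{\psi'(x)^2}{E-E_m}\right) = \sqrt{E-E_m}\;g(x),
\]
which is exactly balanced so that the right-hand side is $\sqrt{E-E_m}\,g(x)$; here I use $E > E_m$, so $\sqrt{E-E_m}$ is real and positive. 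Combining the two displays gives the pointwise differential inequality
\[
|g'(x)| \le \frac{|V(x) - E_m|}{\sqrt{E - E_m}}\; g(x).
\]

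From here I would integrate. Since $g \ge 0$, on any subinterval where $g > 0$ we have $\bigl|(\ln g)'(x)\bigr| = |g'(x)|/g(x) \le |V(x)-E_m|/\sqrt{E-E_m}$, so for $x \ge y$,
\[
\ln g(x) - \ln g(y) \le \int_y^x \frac{|V(t) - E_m|}{\sqrt{E-E_m}}\,dt,
\]
and exponentiating yields \eqref{Davbd}. To handle the possibility that $g$ vanishes at some point, I note that if $g(y) = 0$ then $\psi(y) = \psi'(y) = 0$, whence $\psi \equiv 0$ by uniqueness for the linear ODE and the inequality is trivial; otherwise $g$ is continuous and positive at $y$, and on the largest interval $[y, x^*)$ on which $g > 0$ the above bound holds, which forces $g$ to stay bounded away from $0$ on compact subintervals, so in fact $g > 0$ on all of $[y,x]$ and the estimate extends to the full interval. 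Since $V$ is only assumed locally integrable, I would phrase the integration of $(\ln g)'$ via the absolutely continuous representative, which is legitimate because $\psi \in H^2_{\rm loc}$ on the edge.

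The only mild obstacle is the bookkeeping around potential zeros of $g$ and the regularity of $V$; the computational heart — the identity for $g'$ and the AM-GM step — is short and the constants fall out with no optimization needed. No vertex conditions enter, since the statement is confined to a single edge ${\bf e}$.
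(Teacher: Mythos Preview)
Your proof is correct and follows essentially the same route as the paper's: compute $g'$ via the ODE, apply the arithmetic--geometric mean inequality to $2|\psi||\psi'|$, and integrate the resulting Gronwall-type differential inequality. The only differences are cosmetic---the paper first shifts to $E_m = 0$ and leaves the AM--GM step and the $g=0$ case implicit, whereas you keep $E_m$ general and spell these points out.
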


We have rewritten this result in a form compatible with our presentation and have inserted a useful parameter $E_m$
not used in \cite{Dav83}.
While this bound is universally valid on intervals, it is most striking when $E$ is large, as it implies that $g$ is
slowly varying.  The shortcoming of Theorem \ref{vopbd} is that since it involves the derivative, which is
not generally continuous on
a path that passes a vertex, it is difficult to adapt to regions containing vertices.   That this is a true difficulty is illustrated in Case Studies
\ref{scarycasestudy} and \ref{scarycasestudy2}, in which
the magnitude of an eigenfunction differs dramatically on parts of a graph separated by vertices.

For completeness we offer a proof of Theorem \ref{vopbd}.

\begin{proof}
Using the freedom to redefine $V \to V - E_m$ if simultaneously $E - E_m$, we may
set $E_m = 0$ in the proof.  We take
the derivative of $g$:
\begin{equation}\label{derivg}
g' = 2\psi \psi' + \frac{2\psi' \psi''}{E} = 2\psi \psi'\left(1 + \frac{V - E}{E}\right) = \frac{2 V}{E} \psi \psi'.
\end{equation}
This yields
\begin{equation}
g' \leq \frac{|V|}{\sqrt E}(\psi^2 + (\psi')^2/E) = \frac{|V|}{\sqrt E}g.
\end{equation}
Dividing by $g$ and integrating yields the result.
\end{proof}

In concert with Sturm oscillation theory, Theorem \ref{vopbd} can sometimes be used to obtain ``landscape functions'' that do not contain derivatives explicitly, so long as
vertices are avoided.

\begin{cor}
Let $\psi(x)$ be a real solution of \eqref{EVE} on an interval $\mathcal{I}$, and suppose that
$E - V(x) \ge k^2 > 0$ on a subinterval $\mathcal{I}_-= (x_1,x_2)$ of length at least $\frac \pi k$.
Then for any $x \ge x_1$ and any $E_m < E$,
\begin{align}\label{g-to-lnscp}
|\psi(x)| &\le \sqrt{\psi(x)^2 + \frac{\psi'(x)^2}{E-E_m}}\nonumber\\
& \le \|\psi\|_{L^\infty(\mathcal{I}_-)} \exp\left(\frac{1}{2 \sqrt{E-E_m}} \int_{x_1}^x|V(t) - E_m| dt\right).
\end{align}
The analogous statement holds for any $x \le x_2$.
\end{cor}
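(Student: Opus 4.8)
The plan is to read \eqref{g-to-lnscp} as two inequalities, the first of which is a triviality and the second an application of Theorem~\ref{vopbd} anchored at a well-chosen base point. The first inequality is immediate: since $E_m < E$, the term $(\psi'(x))^2/(E-E_m)$ is nonnegative, so $|\psi(x)| = \sqrt{(\psi(x))^2} \le \sqrt{(\psi(x))^2 + (\psi'(x))^2/(E-E_m)} = \sqrt{g(x,E,E_m)}$. The whole content is therefore in the second inequality.

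The key step is to locate a point $y \in [x_1,x_2]$ at which $\psi'(y)=0$; at such a point $g(y,E,E_m)=(\psi(y))^2 \le \|\psi\|_{L^\infty(\mathcal{I}_-)}^2$, since $[x_1,x_2]=\overline{\mathcal{I}_-}$. To produce $y$ we would use Sturm oscillation theory in Pr\"ufer form. Assuming $\psi\not\equiv 0$ (otherwise there is nothing to prove), write $\psi = r\sin\theta$ and $\psi' = k\,r\cos\theta$ on $\mathcal{I}_-$, with $r=\sqrt{(\psi)^2+(\psi')^2/k^2}>0$; then \eqref{EVE} gives $\theta'(x) = k\cos^2\theta + \tfrac{E-V(x)}{k}\sin^2\theta \ge k$, using $E-V(x)\ge k^2$ on $\mathcal{I}_-$. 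Hence $\theta$ increases by at least $k(x_2-x_1)\ge\pi$ across $\mathcal{I}_-$, so it passes through a value in $\tfrac{\pi}{2}+\pi\mathbb{Z}$; any such $y$ satisfies $\cos\theta(y)=0$, i.e.\ $\psi'(y)=0$. (Equivalently, one may compare \eqref{EVE} with $u''+k^2u=0$ through the Sturm comparison theorem to see that consecutive zeros of $\psi$ lie at distance $\le\pi/k$, so $|\psi|$ attains a local extremum in $\overline{\mathcal{I}_-}$, the critical point being found by Rolle's theorem; this is the appeal to ``Sturm oscillation theory.'')

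With such a $y$ fixed we would take $x\ge x_1$ and split into cases. If $x\le x_2$, the bound in \eqref{g-to-lnscp} is immediate, since then $|\psi(x)|\le\|\psi\|_{L^\infty(\mathcal{I}_-)}$ (by continuity at the endpoints) while the exponential factor is at least $1$. If $x>x_2$, then $x_1\le y\le x_2<x$, and Theorem~\ref{vopbd} with base point $y$ yields $g(x,E,E_m)\le g(y,E,E_m)\exp\bigl(\tfrac{1}{\sqrt{E-E_m}}\int_y^x|V(t)-E_m|\,dt\bigr)\le\|\psi\|_{L^\infty(\mathcal{I}_-)}^2\exp\bigl(\tfrac{1}{\sqrt{E-E_m}}\int_{x_1}^x|V(t)-E_m|\,dt\bigr)$, where the last step enlarges the domain of integration using $y\ge x_1$ and $|V-E_m|\ge 0$. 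Taking square roots and chaining with the first inequality gives \eqref{g-to-lnscp}. The statement for $x\le x_2$ then follows by running the same argument in the reflected coordinate $x\mapsto x_1+x_2-x$.

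The one genuinely non-routine ingredient is the middle paragraph: extracting an interior zero of $\psi'$ from the hypotheses that $\mathcal{I}_-$ has length $\ge\pi/k$ and $E-V\ge k^2$ there. A small amount of care is needed because the produced $y$ may coincide with an endpoint $x_1$ or $x_2$, which is harmless since the argument only uses $x_1\le y\le x$. Everything else is bookkeeping around Theorem~\ref{vopbd} together with the trivial estimate $(\psi(x))^2\le g(x,E,E_m)$.
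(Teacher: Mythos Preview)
Your proof is correct and follows the same approach as the paper: locate a zero $y$ of $\psi'$ in $\overline{\mathcal{I}_-}$ via Sturm oscillation (which you make explicit through the Pr\"ufer angle, whereas the paper simply cites the oscillation theorem), anchor $g$ at that point where $g(y,E,E_m)=\psi(y)^2\le\|\psi\|_{L^\infty(\mathcal{I}_-)}^2$, apply Theorem~\ref{vopbd}, and then enlarge the domain of integration from $[y,x]$ to $[x_1,x]$. One small remark: in your case $x\le x_2$ you (like the paper) establish only the outer inequality $|\psi(x)|\le\|\psi\|_{L^\infty(\mathcal{I}_-)}\exp(\cdots)$ rather than the intermediate bound on $\sqrt{g}$, but that outer bound is the operative one for the landscape application.
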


\begin{proof}
According to the Sturm Oscillation Theorem, in any closed interval of length  $\frac \pi k$, $\psi'(x)$ must vanish at least once, and at any such point
$g(x) = |\psi(x)| \le \|\psi\|_{L^\infty(\mathcal{I}_-)}$.  We now apply the Theorem, taking into account that the location of the maximum of
$|\psi(x)|$ in $\mathcal{I}_-$ is not specified and hence extending the range of the integral to begin at $x_1$.
\end{proof}

The usefulness of  \eqref{Davbd} is illustrated in Case Study \ref{tetrah}.

\section{Transition r\'egime estimates}\label{trans}

In the section we provide a final set of upper bounds on $|\psi(x)|$, which have advantages when $V(x) - E$ is small, which we refer to a the
{\em transition r\'egime}.  We begin with the Agmon method, but make
different choices of the
functions that appear. In particular we will choose
$F\equiv 1$ in the basic identities
\eqref{1stid.b}--\eqref{2ndid}, and choose $\eta$ to be supported in some region
where the negative part of $V - E$ is small.  We think of this set as a particular ``window''
and find that the value of $\psi(x)$ is controlled by its values around the border of the window.

\begin{thm}\label{window}
Consider a region $W \subset \Gamma$ such that for some $\ell > 0$,
$B_{\ell} := \{x \in W: \dist(x, \partial W) \le \ell\}$.  If $B_{\ell} $ contains no vertices, then for all $x \in W$ such that
$\dist(x, \partial W) \ge \ell$,
\begin{equation}\label{windowbd}
|\psi(x)|^2 \le \left(\frac{1}{\ell^2} \int_{B_{\ell}} \psi^2 + \int_W (E - V(x))_+ \psi^2\right) \dist(x, \partial W).
\end{equation}
\end{thm}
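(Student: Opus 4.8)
The plan is to reuse the Agmon identity \eqref{2ndid} with the simplest possible choices, namely $F\equiv 1$ and $E_m=0$, so that the weight $Q^2=V-E-(F'/F)^2$ becomes just $V-E$, which may change sign but is harmless in this argument. With $F\equiv 1$, identity \eqref{2ndid} reads
\[
\eta\psi\,(H-E)(\eta\psi)=\bigl((\eta\psi)'\bigr)^2+(V-E)(\eta\psi)^2-H'(x),
\]
and since $(H-E)\psi=0$ on the edges contained in $W$ (and $B_\ell$ contains no vertices, so on $\supp\eta'$ we only ever differentiate along a single edge), the left side equals $-\eta''\psi^2-\eta\eta'\psi\psi'$, which via the algebraic rearrangement leading to \eqref{1stid.b} can be written as $\psi^2\bigl(\eta'(\eta)'\bigr)-G'(x)=\psi^2(\eta')^2-G'(x)$ with $G=\eta\eta'\psi^2$. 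Integrating over $\Gamma$, all the $G'$ and $H'$ total-derivative terms integrate to $0$ because $\supp\eta'$ lies inside vertex-free edges and $\eta$ vanishes near $\partial W$, giving
\[
\int_\Gamma\bigl((\eta\psi)'\bigr)^2 \le \int_\Gamma (E-V)_+\,(\eta\psi)^2+\int_\Gamma(\eta')^2\psi^2 .
\]
Here I have moved the $(V-E)(\eta\psi)^2$ term to the right and bounded $-(V-E)$ by $(E-V)_+$, which only weakens the inequality.

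Next I would fix a point $x\in W$ with $\dist(x,\partial W)\ge \ell$ and choose $\eta$ exactly as in \S\ref{Agmon}: $\eta\equiv 1$ on the component of $\{y:\dist(y,\partial W)\ge\ell\}$ containing $x$ (in particular $\eta(x)=1$), $\eta$ a linear ramp of slope $1/\ell$ across $B_\ell$, and $\eta\equiv 0$ outside $W$, so that $|\eta'|\le 1/\ell$ and $\supp\eta'\subset B_\ell$. Then the last term above is bounded by $\ell^{-2}\int_{B_\ell}\psi^2$, so
\[
\int_\Gamma\bigl((\eta\psi)'\bigr)^2 \le \frac{1}{\ell^2}\int_{B_\ell}\psi^2+\int_W(E-V)_+\psi^2 .
\]
Finally, to extract a pointwise bound at $x$, pick any path $P$ in $W$ from $x$ to $\partial W$; since $\eta\psi$ vanishes at the $\partial W$ end and equals $\psi(x)$ at $x$, Cauchy--Schwarz gives $\psi(x)^2=\bigl(\int_P(\eta\psi)'\bigr)^2\le |P|\int_P\bigl((\eta\psi)'\bigr)^2\le |P|\int_\Gamma\bigl((\eta\psi)'\bigr)^2$. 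Minimizing $|P|$ over such paths replaces $|P|$ by $\dist(x,\partial W)$, and combining with the displayed bound yields exactly \eqref{windowbd}.

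The only delicate point is the bookkeeping at the vertices: one must be sure that the boundary/total-derivative terms $G'$ and $H'$ genuinely integrate to zero. This is why the hypothesis that $B_\ell$ contains no vertices is imposed — it guarantees $\supp\eta'$ sits inside vertex-free portions of edges, so the $G'$ integral vanishes by the fundamental theorem of calculus on each edge, while the $H'$ term vanishes because $\eta\psi\equiv 0$ near $\partial W$ and $\psi$ satisfies Kirchhoff conditions at any vertices lying in the interior region where $\eta\equiv 1$ (there $H=\psi(\psi)'$ and the outgoing derivatives sum to zero). Everything else is the routine Cauchy--Schwarz-along-a-path argument already used for Theorem~\ref{tunneldecr}, with the single difference that $(E-V)_+$ is allowed to be positive, contributing the extra $\int_W(E-V)_+\psi^2$ term rather than helping as the tunneling term $e^{-\rho_A}$ did.
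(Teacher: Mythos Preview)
Your proof is correct and follows essentially the same route as the paper: set $F\equiv 1$ in the Agmon identities, choose $\eta$ to be a linear ramp across $B_\ell$, integrate to get $\int_W((\eta\psi)')^2 \le \ell^{-2}\int_{B_\ell}\psi^2+\int_W(E-V)_+\psi^2$, and then apply Cauchy--Schwarz along a shortest path from $x$ to $\partial W$. Your explicit discussion of why the $G'$ and $H'$ boundary terms vanish (using the Kirchhoff conditions at interior vertices and the hypothesis that $B_\ell$ is vertex-free) is in fact more careful than the paper's own treatment.
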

Here $z_+ := \max(z,0)$.  An obvious simple upper bound for a normalized eigenfunction is
\begin{equation}\nonumber
|\psi(x)|^2 \le \left( \frac{1}{\ell^2} + \max_W (E - V(x))_+\right) \dist(x, \partial W) .
\end{equation}
Hence, if we can choose $\ell$ large on a transition r\'egime or we have information that
$|\psi|$ is small near its boundary, e.g., because of an Agmon estimate, we are ensured that $\psi$
remains small on the window $W$.

\begin{proof}
We set $F\equiv 1$  in
\eqref{1stid.b}--\eqref{2ndid} and choose $\eta(x) = 1$ for $\dist(x, \partial W) \ge \ell$,
$\eta(x) = 0$ on $W^c$, and interpolate with a linear ramp on $B_\ell$, hence $|\eta'(x)| = \frac{1}{\ell}$ on
$B_\ell$.  By integrating \eqref{1stid.b}--\eqref{2ndid}, we find that
\begin{equation}
\int_W{\left(((\eta(x) \psi(x))')^2 + (V(x) - E) (\eta(x) \psi(x))^2\right) dx}  = \frac{1}{\ell^2} \int_{B_\ell} {\psi(x)^2 dx} .
\end{equation}
For $x \in W \setminus B_\ell$, choosing a parametrization so that $x=0$ corresponds to the nearest point on a path to
$x$ for which $\eta(0) = 0$, we can write
\begin{align*}
\psi(x)^2 &= \left(\int_0^x (\eta(t) \psi(t))' dt\right)^2\\
& \le x \int_0^x ((\eta(t) \psi(t))')^2 dt\\
& \le  \dist(x, \partial W) \int_{W} ((\eta(t) \psi(t))')^2 dt\\
& \le  \dist(x, \partial W) \left( \frac{1}{\ell^2} \int_{B_\ell} {\psi(x)^2 dx}
+ \int_W (E - V(x))+- \psi(x)^2 dx \right),
\end{align*}
as claimed.
\end{proof}

An alternative on an edge where $|V(x) - E|$ is small and no vertices are encountered,
good pointwise control of eigenfunctions can be
obtained with Gronwall's inequality.
Using the Fundamental Theorem of Calculus we write
\[
\psi(x) = \psi(x_0) + (x-x_0) \psi'(x_0) + \int_{x_0}^x{(x-t) (V(t) - E) \psi(t) dt},
\]
so
\[
|\psi(x)| \le |\psi(x_0) + (x-x_0) \psi'(x_0)| + \int_{x_0}^x{(x-t) |V(t) - E| |\psi(t)| dt},
\]
to which Gronwall's inequality as stated in \cite{Hart} applies, yielding
\begin{equation}\label{Gronw}
|\psi(x)| \le |\psi(x_0) + (x-x_0) \psi'(x_0)|  \exp\left( \int_{x_0}^x{(x-t) |V(t) - E| dt}\right).
\end{equation}
The bound \eqref{Gronw} is of a similar type to \eqref{Davbd}, one being more useful when
$E \gg V(x)$ and the other when $E \approx V(v)$.

\section{Case studies}

\begin{enumerate}
\item\label{scarycasestudy}
Our first two case studies show that a wavefunction $\psi$ can be concentrated to an arbitrarily large extent, even
completely, in subsets of $\mathcal{C}_E$, the
part of the graph where $E > V(x)$, while being small or even vanishing in other subsets of $\mathcal{C}_E$
which do not differ in any meaningful way from the sets on which $\psi$ is concentrated.
Consider a quantum graph with a constant potential $V=0$ which includes several circles
$\mathcal{C}_j$
of length $2 \pi/k$, which have been connected by edges.
We assume two edges per circle and use a coordinate system on each $\mathcal{C}_j$
so that the edges connect at $x=0, \pi$.
Letting $E = k^2$, on the $j$-th circle we can have an eigenfunction
$\mu_j \sin(k x)$, which vanishes at the nodal points $x=0, \pi$.  We suppose that the connecting edges
are attached at these nodal points and that the eigenfunction
equals $0$ on every connecting edge.  The numbers $\mu_j$ can be assigned arbitrarily, showing that there is no control
wha tsoever of the magnitude of the eigenfunction on
a given circle in terms of its values elsewhere!
We can even shrink the edges in this example so that pairs of circles are in direct contact.
\item\label{scarycasestudy2}
As a variant of the previous case study,
we show that the problem is not that the eigenfunctions can vanish.  On the same graph, let the eigenfunctions have the form
$\kappa_j \cos(k x) + \mu_j \sin(k x)$ on the circles $\mathcal{C}_j$.  We equip the connecting edges with any set of
Sturm-Liouville eigenfunctions having the eigenvalues $k^2$ and Neumann conditions at the ends.  We choose
$\kappa_j$ to guarantee continuity of the eigenfunctions at the vertices where the circles meet the edges, and observe
that the Kirchhoff conditions are satisfied at those vertices by construction.

\item
It is also possible for an eigenfunction $\psi$ to concentrate on
$\mathcal{T}_E$, the
part of the graph where $E < V(x)$, as shown by the example
of a half line with a little circle attached at the origin.
The potential is a constant on the circle, $0$ on $[0, a)$,
and some other constant on $[a, \infty)$.  The constants and the eigenvalue are chosen
so that that the eigenfunction is constant on the circle.
We can take $a$ and the size of the circle small and show explicitly that almost all of the $L^2$ norm of
$\psi$ arises from the part of $\psi$ supported in the tunneling region.
In contrast to the situation where $E > V(x)$, however, the magnitude of the
eigenfunction must be small in the interior of $\mathcal{T}_E$,
in accordance with Proposition \ref{tunneldecr}.

\item
Since a landscape function is supposed to be an upper bound for {\em any} eigenfunction of a given value of $E$, we must accept that some eigenfunctons will be
quite small in a region where the appropriate landscape function is large.  We recall the analysis of perturbed double-well models, which Simon has called
``the flea on the elephant'' in \cite{Sim85}.  As descried in that work, a Schr\"odinger operator containing a classic double-well potential with a reflection symmetry
will have a ground-state eigenfunction that is symmetric and equally concentrated near the bottoms of the two wells, and an antisymmetric eigenfunction with very nearly the same
eigenvalue, likewise equally concentrated near the bottoms of the two wells except for a difference of sign.  By making a very small perturbation that is not symmetric,
the ground state eigenfunction will be
concentrated in only one of the wells, and smaller by an exponential factor in the other well.  The next state will be concentrated in the other well, and smaller by an exponential factor in the well where the first eigenfunction resides.  Meanwhile, at the level of generality of a landscape function, the upper bounds we wish to create will be virtually the same for
the first two eigenfunctons for either the strictly symmetric potential or the slightly perturbed potential.  Mathematical details are to be found in \cite{Sim85}.

\item An oscillatory example. Let us consider $V(x) = \sin (2x)$ on $[- \pi/2, \pi/2]$ and we take periodic boundary conditions so we are effectively on a circle. The Agmon region is therefore on $[0, \pi/2]$. Using \eqref{e:calc} and the fact that we are working on $S^1$ we get that
    \begin{align*}
    \psi(x) &\leq \sqrt{\frac{(x + \pi/4)(3\pi/4 - x)}{\pi}}\;
\frac{2 \|\psi \|_{L^2([-\pi/2, 0])}}{\pi/2} \\
&\times e^{- \min\{\int_0^x \sqrt{\sin (2t)}dt, \int_x^{\pi/2}\sqrt{\sin (2t)}dt\}}
    \end{align*}
To make sense of this bound we note that
$e^{- \min\{\int_0^x \sqrt{\sin (2t)}dt, \int_x^{\pi/2}\sqrt{\sin (2t)}dt\}}$ provides exponential decay into the Agmon region. The square root prefactor $\sqrt{\frac{(x + \pi/4)(3\pi/4 - x)}{\pi}}$ is maximized at $x = \pi/4$ with a maximum value of $\sqrt \pi /2$. The remaining factor is double the averaged $L^2$ norm of $\psi$ on the landscape region.

\item\label{maybebetter}

Upper bounds based on the maximum principle may or may not include the value of $|\psi|$ at the boundary of the region on which
they apply, depending on the sign of $W$ on the boundary in expressions like \eqref{ttlsfn}.  A case study to illustrate the possible dependence on boundary values can be based on the classic square-well example.
Thus let $V(x) = M > 0$ when $|x| > 1$, and $V(x) = 0$ when $|x| \le 1$.
If as in \S \ref{landscape} we wish to bound $V(x)$ from below by a convex quadratic, the symmetric choices
would be
\[
V(x) \ge b^2 (x^2 - 1)
\]
on the interval $[-1,1]$.  This is not a positive function, but
we can fix that by adding $b^2$ to $V$ if we likewise replace
$E$ by $E+b^2$ in all subsequent formulae.

Since $V(x)$ is symmetric, the eigenfunctions are even or odd, in particular, on $[-1,1]$ they are proportional to
\[
\cos(\sqrt{E} x) \text{\,\,\,or\,\,\,} \sin(\sqrt{E} x),
\]
and they make a $C^1$ matching with a multiple of
\[
\exp(-\sqrt{M-E} x)
\]
for $|x| \ge 1$.  By a standard calculation of elementary quantum mechanics, the eigenvalues are determined by one of the following conditions
\[
\tan(\sqrt{E}) = \sqrt{\frac{M}{E} - 1}
\]
or
\[
- \cot(\sqrt{E}) = \sqrt{\frac{M}{E} - 1}.
\]

As a variant, this example can be modified to a quantum graph by replacing the interval $[-1,1]$
with $n$ copies of the interval, and imposing Kirchhoff conditions.  By again exploiting the symmetry, the eigenfunctions are as before and the eigenvalues are
determined by
\[
\tan(\sqrt{E}) \text{\,\,or\,} - \cot(\sqrt{E}) = \frac 1 n  \sqrt{\frac{M}{E} - 1}.
\]
The factor $\frac 1 n$ will make no qualitative difference.

The lowest eigenvalue will lie in the interval $(0,\frac{\pi^2}{4}$ (see Figure \ref{f:xing}) and by a choice of $M$ can take on any value in this range.

\begin{figure}[h]
\includegraphics[scale = .3]{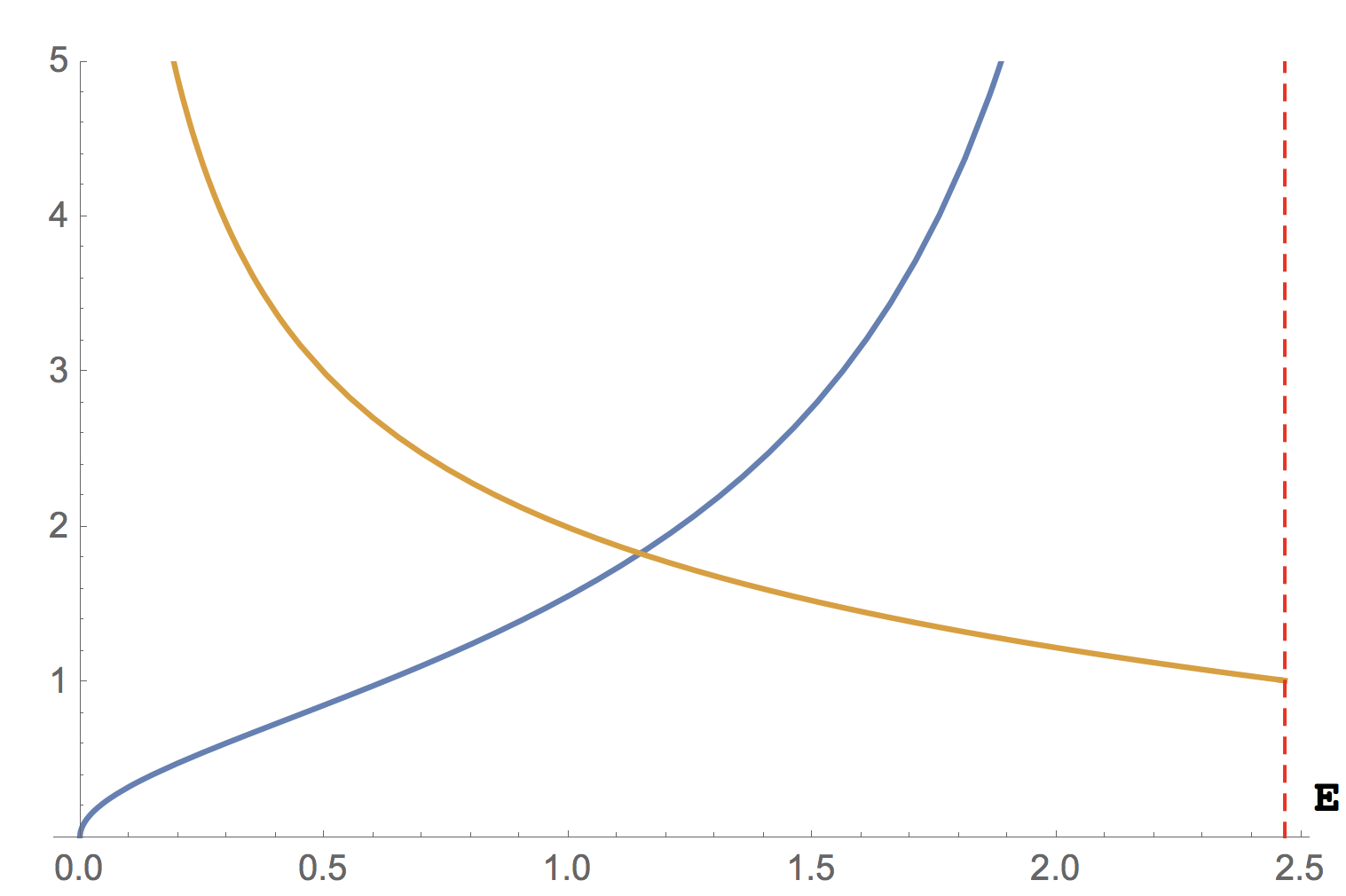}
\caption{The lowest eigenvalue in Case Study \ref{maybebetter}.}
\label{f:xing}
\end{figure}

Let us compare the corresponding eigenfunction for fixed values of $x \in [-1,1]$ with the landscape function of \S \ref{landscape}, {\em viz}.,
\[
\frac{E + b^2}{b}\left(\frac 1 2 + e^{-\frac{b x^2}{2}}\right) \|\psi\|_{L^\infty[-1,1]}.
\]
Minimizing the first factor with the choice $b = \sqrt{E}$, for a normalized eigenfunction we get
\[
|\psi(x)| - 2 \sqrt{E} \left(\frac 1 2 + e^{-\frac{\sqrt E x^2}{2}}\right) \|\psi\|_{L^\infty[-1,1]}\quad\quad\quad\quad\quad  \quad\quad\quad\quad\quad
\]
\[\quad\quad\quad\quad\quad  \quad\quad\quad\quad\quad  \le \left(|\psi(1)| - 2 \sqrt{E}\left(\frac 1 2 + e^{-\frac{\sqrt E}{2}}\right) \|\psi\|_{L^\infty[-1,1]} \right)_+
\]
for $x \in [-1,1]$.  Equivalently,
\begin{equation}
|\psi(x)| \le \max\left((\psi(1) + 2 \sqrt{E}\left(e^{-\frac{\sqrt E x^2}{2}}- e^{-\frac{\sqrt E}{2}}\right)\|\psi\|_{L^\infty[-1,1]}, \sqrt{E} \left(1 + 2e^{-\frac{\sqrt E x^2}{2}} \right) \|\psi\|_{L^\infty[-1,1]} \right).
\end{equation}
The first choice in the maximum is operative for small $E$, whereas the second is operative for larger values.  For yet larger values of $E$, however, the uniform bound of
Proposition~\ref{linftybd1}
may be superior.  The situation is depicted in Figure \ref{f:CaseStudymaybebetter}.

\begin{figure}[h]
\includegraphics[scale = .5]{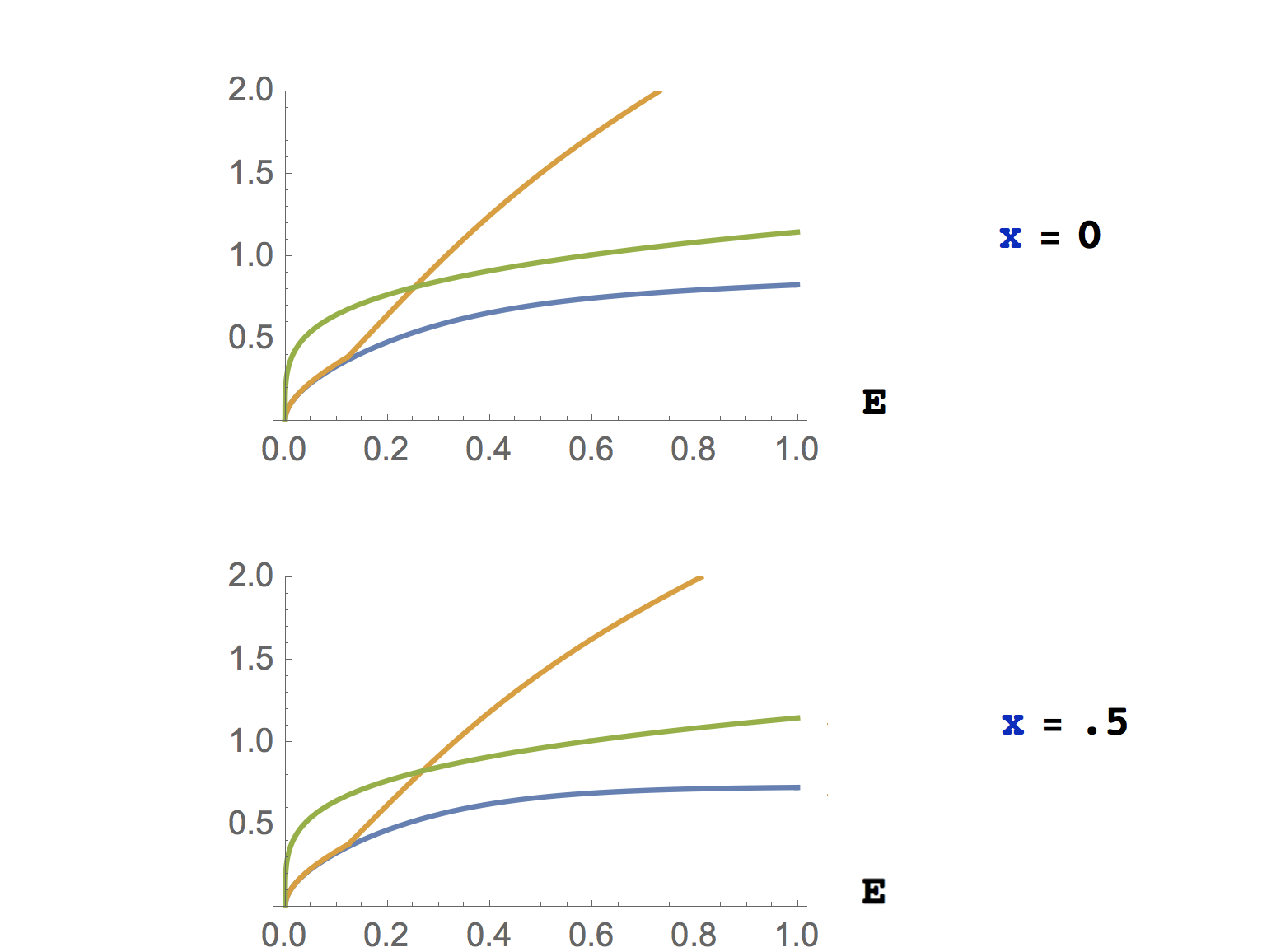}
\caption{Landscape bounds for Case Study \ref{maybebetter} with two fixed values of $x$ as functions of $E$, calculated with
{\em Mathematica}.  The eigenfunction is in blue, a torsion-type bound in gold, and the
uniform bound from Proposition~\ref{linftybd1} in green.  In illustrating the torsion-type bound we have used the maximum of the exact eigenfunction and its
value at $x=1$ (both of which can be calculated in closed form in terms of $E$), rather than approximations.}
\label{f:CaseStudymaybebetter}
\end{figure}

\item\label{Mathieu}
Mathieu functions.
Our goal in this case study is to provide evidence that the construction in \S \ref{landscape} is of interest for some range of parameter values.
The Mathieu equation in standard form is $2 \pi$-periodic, and conventionally the coefficient of the cosine potential
is denoted $2 q$.  We shift that upwards to ensure our convention of a nonnegative potential and this consider
\begin{equation}\label{MathieuEq}
- \psi^{\prime\prime} + 2 q (1+\cos(2 x)) \psi = E \psi
\end{equation}
on a circle of length $2 \pi$.
We note that the tunneling and classically allowed r\'egimes each have two connected components, and therefore construct a global
of $\Upsilon$ as in \S \ref{landscape} by concatenating truncated Gaussians and adding a constant.  We set $q=10$ and used Mathematica to calculate
an even and an odd eigenfunction with eigenvalues computed as 6.0630\dots
 and, respectively, 6.0634\dots.
In Figure \ref{f:Mathieu} the eigenfunctions are compared with
an upper bound of torsion type, using the computed $L^\infty$ norm of the normalized Mathieu eigenfunctions.  For comparison, on the intervals where $|x - n \pi| < 1$ an Agmon-type upper bound derived from Theorem \ref{tunneldecr}.  Here we incorporated the $L^2$ norm of the Mathieu eigenfunctions on intervals such as $1, \frac{\pi}{3}$, but did not attempt to optimize this interval (used as the support of our $\eta'$) or other details of the Agmon-type estimate.  Meanwhile, the uniform hypercontractive bound of Proposition~\ref{linftybd1} was computed as 1.87124, which in this case is not competitive with the other upper bounds.

Although this case study does not have vertices, since the odd Mathieu eigenfunction has zeroes at $0$ and $\pi$, we could attach an edge, or even a complicated graph, linking these two points on the circle and extend that $\psi$ by $0$, converting this into an example on a more complex graph.

\begin{figure}[h]
\includegraphics[scale = 0.05]{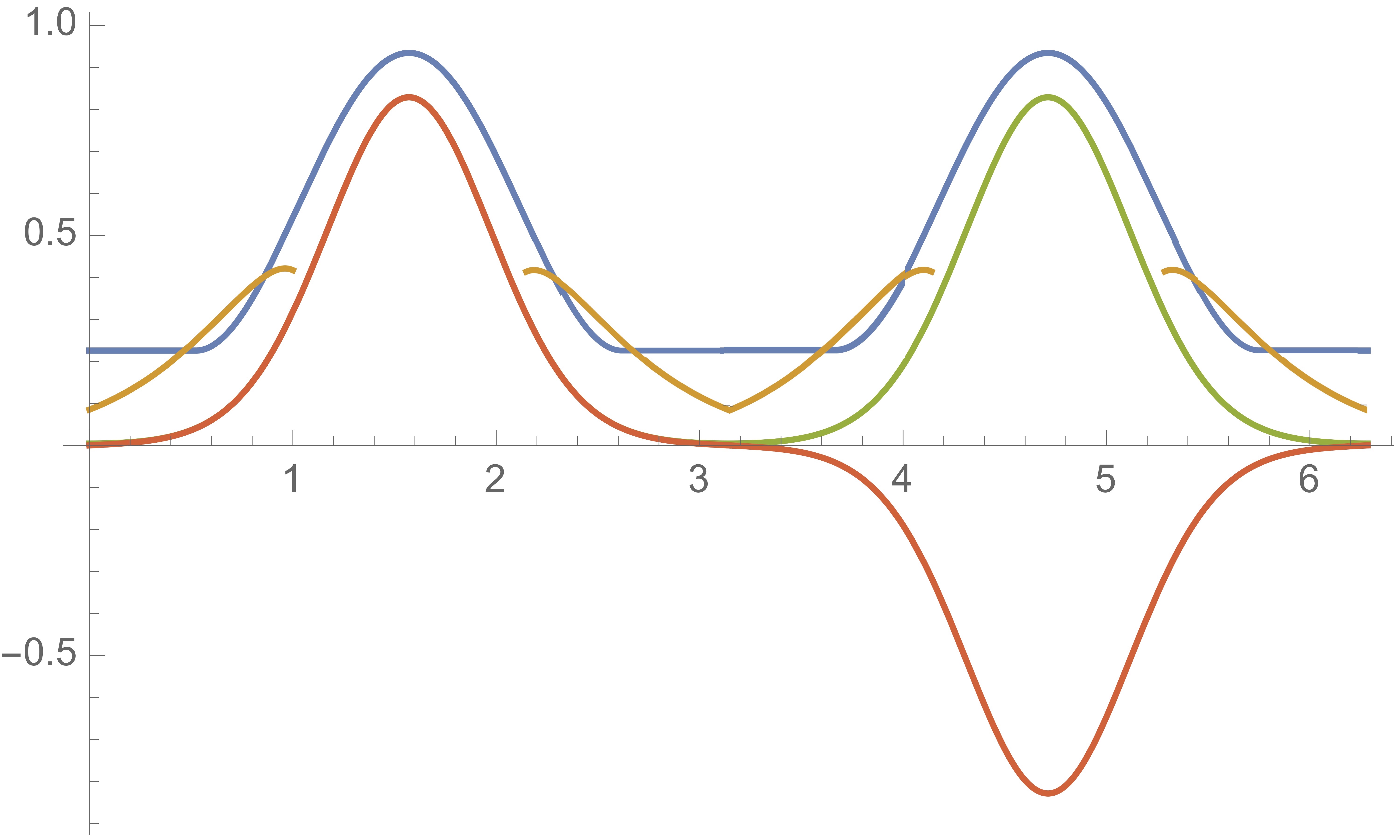}
\caption{The first two Mathieu eigenfunctions for $q=10$  (green and red), along with landscape bounds using a simplified torsion function (blue) and
Agmon's method (gold) (Case Study \ref{Mathieu}), calculated with
{\em Mathematica}.
In the torsion-type bound we have used a numerical calculation of the maximum of the Mathieu functions.  The Agmon bound is self-contained, but we have not attempted to optimize
details such as the choice of $\ell$.}
\label{f:Mathieu}
\end{figure}

\item\label{tetrah}
Our final case study shows the kind of eigenfunction control that can be achieved when $E - V(x)$ is large and an edge is long enough for the eigenfuction to oscillate many times.  In this situation the best options are the bounds of
Proposition~\ref{linftybd1} (uniform) and \ref{vopbd} (with an exponential integral).
Consider a regular tetrahedral graph with six edges of length $2 \pi$.  On three edges connected to the top vertex we will place a large, positive constant potential,
while on each of the other three edges we place a Mathieu potential of the same type as in Case Study \ref{Mathieu}, with coordinate $x=0$ at the centers of the latter edges.
Using the symmetries of the tetrahedron, we can find some explicit eigenfunctions (with some constants determined numerically), consisting of hyperbolic cosines on the edges
connecting to the top vertex and even-symmetry Mathieu functions on the other edges.  Some Mathieu parameter values for which this is possible turned out
to be $q=10$, $E=72$
and the even more highly oscillatory $q=5$, $E=300$.
As shown in
Figures \ref{f:72} and  \ref{f:300}, when $E$ is only a few times the maximum value of the potential ($72$ vs.\ $2 q = 20$), these upper bounds are of the right order of magnitude but rather crude, whereas the variable bound becomes much tighter when the ratio of $E$ to the maxiumum value of the potential  is made larger ($300$ vs. $2 q = 10$).

\begin{figure}[H]
\includegraphics[scale = 0.04]{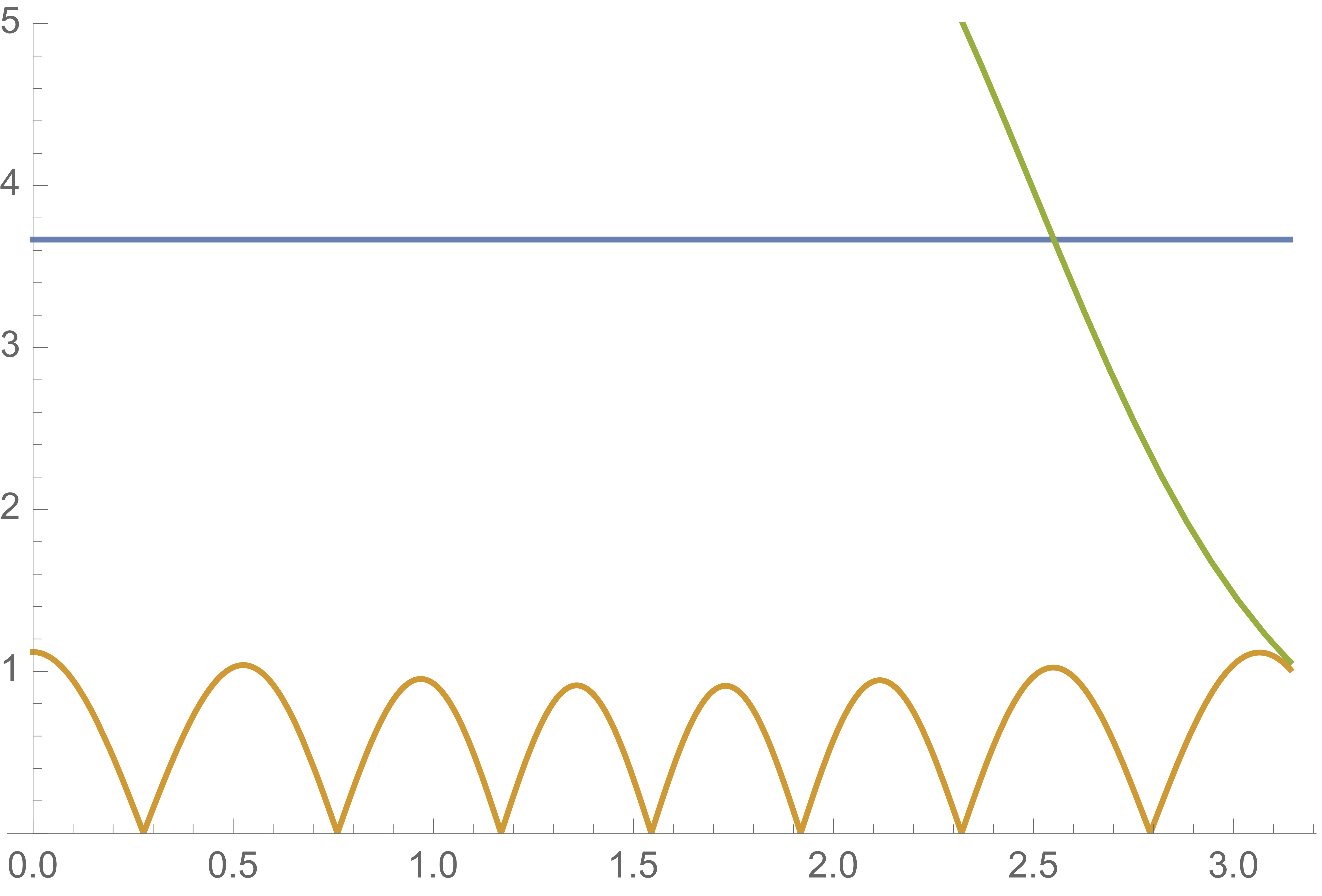}
\caption{An even Mathieu-type eigenfunction on an edge of a tetrahedron,
with $q=10$, $E=72$ (red),
shown in magnitude, along with the uniform upper bound of
Proposition~\ref{linftybd1} and the
upper bound from Theorem~\ref{vopbd} (green).
 (Case Study \ref{tetrah}).}
\label{f:72}
\end{figure}

\begin{figure}[H]
\includegraphics[scale = 0.04]{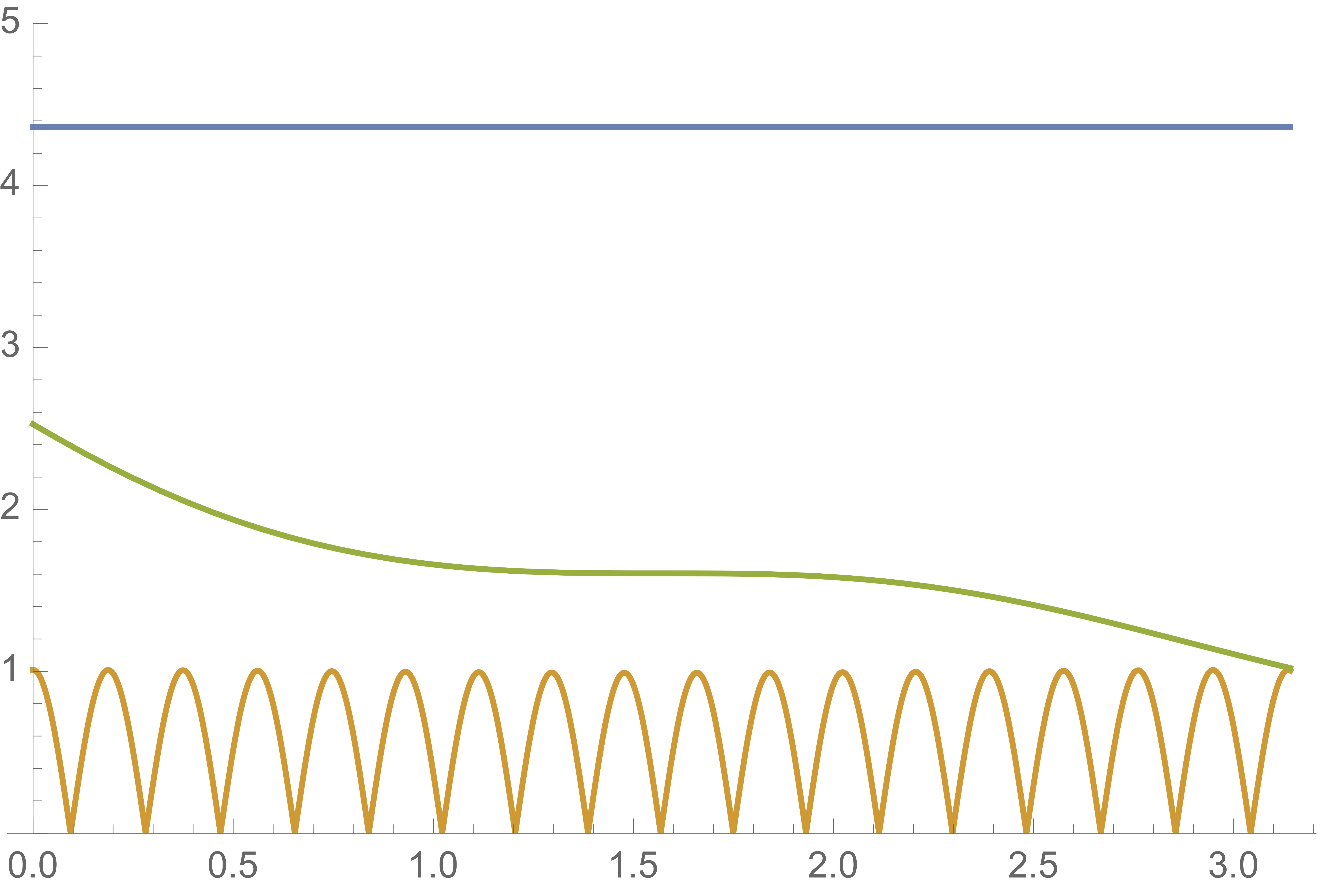}
\caption{An even Mathieu-type eigenfunction on an edge of a tetrahedron,
with $q=5$, $E=300$ (red),
shown in magnitude,
along with the uniform upper bound of
Proposition~\ref{linftybd1} and the
upper bound from Theorem~\ref{vopbd} (green).
 (Case Study \ref{tetrah}).}
\label{f:300}
\end{figure}

Figures \ref{f:72} and \ref{f:300} depict the eigenfunctions on the outer edges of the tetrahedral model,
along with the associated uniform upper bound and the upper bound of Theorem \ref{vopbd}.  Since
the eigenfunctions are even, only the interval $[0, \pi]$ is shown on the edge, which has total length $2 \pi$.
\end{enumerate}

\subsection*{Acknowledgments}

The authors are grateful to D. Jerison for correspondence about prior literature. A.M. was supported by the Royal Society [UF160569].


\end{document}